\numberwithin{equation}{section}
\numberwithin{figure}{section}
\newtheorem{theorem}{Theorem}[section]
\newtheorem{lemma}[theorem]{Lemma}
\newtheorem{remark}[theorem]{Remark}
\begin{document}
\title[ Malliavin calculus for the stochastic heat equation and results on the density ] {Malliavin calculus for the stochastic heat equation and results on the density}

%%%%%%%%%%%%%%%%%%%%%%%%%%%%%%%%%%%%%%%%%%%%%%%%%%

%\author[Dimitris. Farazakis]{Dimitris Farazakis$^{*}$}

%\author[Georgia. Karali]{Georgia Karali$^{\dag *}$}
%\author[Alexandra. Stavrianidi]{Alexandra Stavrianidi$^{\# }$}

\author[Farazakis, Karali, Stavrianidi]{Dimitris Farazakis$^{*}$, Georgia Karali$^{\dag *}$, 
Alexandra Stavrianidi $^{\#}$ }

\thanks
{$^{\dag}$ Department of Mathematics, National and Kapodistrian University of Athens, Panepistimiopolis, Athens, Greece.}

\thanks
{$^{\#}$ Department of Mathematics, Stanford University, USA}
\thanks
{$^{*}$ Institute of Applied and Computational Mathematics,
FORTH, GR--711 10 Heraklion, Greece.}

%
%
%

%\begin{AMS}
\subjclass{}
%\end{AMS}
%\date{Received: date / Revised: date}
%\date{today}
%
%

\begin{abstract}
We study the one-dimensional stochastic heat equation with unbounded, nonlinear, Lipschitz coefficients with Dirichlet boundary conditions. Using Malliavin calculus, we construct a piecewise approximation of the solution $u$ and establish regularity results. This approximation enables us to provide a new proof of the existence of a density for the random variable $u(t,x)$ at any fixed $t,x$. Unlike existing proofs, which rely on comparison principles (\cite{muellernualart}, \cite{fjrfhrug}), our approach is based purely on a localization argument, which allows us to handle the unbounded coefficients.
\end{abstract}
\maketitle \pagestyle{myheadings}
\thispagestyle{plain}

{\small\textbf{Keywords:} stochastic partial differential equations, stochastic heat equation, malliavin calculus.}
\thispagestyle{plain}
%`
%
%
%%%%%%%%%%%%%%%%%%%%%%%%%%%%%%%%%%%%%%%%%%%%%%%%%%%%%%%%%%%%%%%%%%%%

\section{Introduction} \label{sbi}
\subsection{The stochastic problem}

We apply Malliavin calculus to the stochastic heat equation in one dimension
\begin{equation} \label{sm0}
\frac{\partial u}{\partial t}=\Delta u+f(t,x,u(t,x))+\sigma(t,x,u(t,x))W(dt,dx),
\end{equation}
where $x \in [0,1],t \in [0,T]$, and we prescribe the Dirichlet boundary conditions, $u(t,0)=u(t,1)=0$,
and an initial condition $u_0 \in C([0,1])$, with $u_0(0)=u_0(1)$. 
The functions $f, \sigma : [0,T] \times [0,1]\times \mathbb{R} \to \mathbb{R}$ are infinitely differentiable with bounded derivatives,  and the diffusion coefficient $\sigma$ is lower bounded by a positive constant. We denote by $W(dt, dx)$ in the forcing term the space-time white noise in the sense of Walsh, \cite{W6}, given as the formal derivative of a Wiener process. 
This stochastic partial differential equation arises in various fields such as population dynamics, genetics, statistical physics, and others. A key tool in our analysis is Malliavin calculus, an extension of the calculus of variations in the stochastic setting that allows us to study the smoothness of the law of the random variable $u(t,x)$ by making precise the notion of differentiation with respect to the white noise. Using tools from Malliavin calculus we can prove that the law of the solution $u(t,x)$ at a specific point $(t,x)$ has a density. We refer the reader to the books \cite{N}, \cite{dalang} for a comprehensive presentation of Malliavin calculus and of stochastic partial differential equations, respectively.

% $W$ will be expressed as follows
% $$dW:=W(dt,dx).$$ 

%$W$ is a space-time white noise on the probability space $(\Omega,\mathcal{F},P)$ equipped with filtration generated by $W$ %such that %
%$$\mathcal{F}^{W}_t:=\sigma(\{W(A) \mbox{ }| \mbox{ } A \in \mathcal{B}([0,t] \times[0,1])\}) \vee \mathcal{N}$$
%for the $\mathbb{P}$-null sets, $\mathcal{N}$. This is a Walsh noise, $\dot{W}$. That is, we define a two-parameter (one %dimensional) Wiener process, $W:=\{W(t,x), \mbox{ } t \in [0,T], \mbox{ } x \in [0,1]\}$, and then the differential form of

The stochastic heat equation has been studied in \cite{fjrfhrug} in the sense of Malliavin calculus, where the authors impose growth conditions on the coefficients $f,\sigma$ and provide a necessary and sufficient condition for the existence of a density for $u(t,x)$ for all $t>0$. They show that the law of the random variable $u(t,x)$ is absolutely continuous with respect to the Lebesgue measure under a nondegeneracy condition on $\sigma$. Their argument relies on a Galerkin approximation by solutions to stochastic ordinary differential equations and on a comparison principle.
In \cite{pardonuaa}, the authors study the stochastic heat equation with autonomous coefficients $f, \sigma$ and show that the law of any vector $\left(u\left(t, x_{1}\right), \ldots, u\left(t, x_{d}\right)\right), d \in \mathbb{N}, 0 \leqslant x_{1} \leqslant \cdots \leqslant x_{d} \leqslant 1, t>0$, has a smooth, strictly positive density with respect to Lebesgue measure, under a stronger nondegeneracy assumption on $\sigma$. In \cite{muellernualart}, the authors prove the smoothness of the density for the solution to $(\ref{sm0})$ with $f,\sigma$ that have linear growth and are Lipschitz. This proof relies on a comparison principle and on the existence of negative moments of all orders for the linear stochastic heat equation with multiplicative noise. 
%In \cite{pardonuaa} they show the existence HOW??? {\color{blue}{explain how they get the density}}.

\smallskip

\noindent The aim of this paper is to provide a different proof for the existence of a density for the random variable $u(t,x)$ for any $t\in (0,T), x \in (0,1)$, where $u$ is the solution to the nonlinear, multiplicative stochastic heat equation with unbounded, Lipschitz coefficients $(\ref{sm0})$. Our novel proof introduces a localization argument, which allows us to work directly with the nonlinear stochastic heat equation with unbounded, Lipschitz coefficients without employing comparison principles or imposing additional growth conditions. We also establish regularity results for the Malliavin derivative of the solution $u$ in the local sense.

%MOTIVATION AND MAIN RESULTS
\subsection{ Definition of the mild solution and setup}

We work on a complete probability space $(\Omega, \mathcal{F}, \mathbb{P})$, with $W$ a space-time white noise. This space is equipped with the filtration generated by $W,  \mathcal{F}_{t}^{W}$, which can be written as

$$
\mathcal{F}_{t}^{W}:=\sigma(\{W(A) \mid A \in \mathcal{B}([0, t] \times[0,1]\}) \cup \mathcal{N},
$$
where $\mathcal{N}$ here denotes the $\mathbb{P}$-null sets. \\

\noindent The solution to $(\ref{sm0})$ in the weak sense must satisfy for every $t \ge0$ and every $\phi \in C^{1,2}([0,T]\times[0,1])$  with $\phi(s,0)=\phi(s,1)=0,$ for every $s \in [0,t]$,
\begin{equation} \label{weak123}
\begin{split}
\int_{0}^{1}u(t,x)\phi(t,x) dx &=\int_{0}^{1} u(0,x)\phi(0,x)dx+\int_{0}^{t}\int_{0}^{1} u(s,x)\frac{\partial^2\phi}{\partial x^2}(s,x)dx ds \\
&+\int_{0}^{t}\int_{0}^{1} u(s,x)\frac{\partial\phi}{\partial t}(s,x)dx ds +\int_{0}^{t}\int_{0}^{1}f(s,x,u(s,x))\phi(s,x)dxds\\
&+\int_{0}^{t}\int_{0}^{1} \sigma(s,x,u(s,x))\phi(s,x) W(ds,dx).
\end{split}
\end{equation}

\noindent We now provide the definition of a mild solution $u$ of $(\ref{sm0})$.
The mild solution of $(\ref{sm0})$ satisfies, for any $x \in [0,1]$ and $t \in [0,T]$,
\begin{equation} \label{smr00}
\begin{split}
u(t,x) &=\int_{0}^{1} G_t(x,y)u_0(y)dy+\int_{0}^{t}\int_{0}^{1} G_{t-s}(x,y) f(s,y,u(s,y)) dy ds \\
&+ \int_{0}^{t}\int_{0}^{1}G_{t-s}(x,y) \sigma(s,y,u(s,y))W(dy,ds),
\end{split}
\end{equation}
where the heat kernel with Dirichlet boundary conditions on $[0,1]$ is given by 
$$
 G_{t}(x,y)=\sum_{n=1}^{\infty} 2e^{-\pi^{2} n^{2} t} \sin(n\pi x) \sin(n \pi y),
 $$
 where $ t\in [0,T], x, y \in[0, 1].$

\subsection{Main results} 
In this work, we present a new proof for the existence of a density for the random variable $u(t,x)$ for any $t\in (0,T), x\in (0,1)$, where $u$ is the solution to the stochastic heat equation with unbounded, Lipschitz coefficients. Our localization approach is inspired by techniques applied in \cite{weber} to the stochastic Cahn-Hilliard equation with bounded noise diffusion and in \cite{AFK} to the stochastic Cahn-Hilliard/Allen-Cahn equation with unbounded noise diffusion in one dimension. Unlike these models, our equation involves the heat kernel, whose singularity near the origin is of higher order. This creates significant analytical challenges, as the standard approach of these previous works involving the $L^{\infty}$ norm fails, requiring us to work with the $L^{2}$ norm instead, and the analysis becomes far more involved technically. In order to establish the positivity of the norm for the Malliavin derivative of $u$ in the proof of the existence of density we need to take advantage of cancellations that arise after obtaining tight bounds.

%More precisely, we construct a localization for $u$ by a sequence $u_{n}$, for which we prove existence of the Malliavin derivative $D_{s,y}u_n$ and the strict positivity condition for a norm of $D_{s,y}u_n$, which can extend to $D_{s,y}u$ and guarantee the existence of a density. 
%

We begin by applying Malliavin calculus on the stochastic heat equation to prove that $ u \in L^{loc}_{1,2}$, i.e., that $u$ has a local version in $L_{1,2}$ in the sense of ~\cite{N}, p. 49 (there exists a sequence 
 $\{ (\Omega_n, u_n ); \Omega_{n} \subseteq \Omega \}_{n \ge 1} \subset \mathcal{F} \times L_{1,2}$  such that $\Omega_n \uparrow \Omega$ a.s.  and 
 $u=u_n$ a.s. on $\Omega_n $). %and then the Malliavin derivative $D_{s,y}u$ is defined without ambiguity as  $D_{s,y}u= D_{s,y} u_n$ on $\Omega_n$ %.
 This implies that $u$ has a local version in $D_{1,2}$ as well. We can then establish the existence of a density for the random variable $u(t,x)$ with $t \in (0,T), x \in (0,1)$ by proving the strict positivity condition of Theorem 2.1.3 from \cite{N}. This localization is not present in (\cite{fjrfhrug}, \cite{muellernualart}, \cite{pardonuaa}), and the proof for the existence of a density in these works is different from ours. In particular, the proofs in (\cite{fjrfhrug}, \cite{muellernualart}) rely on comparison principles to establish the positivity of the norm. In contrast, we work directly with the nonlinear stochastic heat equation and employ a localization method that allows us to handle unbounded, Lipschitz coefficients without imposing additional growth or boundedness conditions on $f,\sigma$.

\smallskip

In particular, we prove the following Main Theorem.
\begin{theorem} \label{theorem21}
{Let $u$ be the mild solution to the stochastic heat equation $(\ref{smr00})$ in one dimension, with zero Dirichlet boundary conditions and initial condition  $u_0$.

Let $f$ and $\sigma$ satisfy:

(1) $f,\sigma$ are infinitely differentiable with bounded derivatives. In particular, this means that $f,\sigma$ are globally Lipschitz in the $u$ variable. Thus, for all $ t\in [0,T], x \in [0,1], u,v \geq 0$ there exist constants $K_{f},K_{\sigma}$ such that 

\begin{equation} \nonumber
\begin{split} 
&|f(t,x,u)-f(t,x,v)| \leq K_{f}|u-v|,  \\
&|\sigma(t,x,u)-\sigma(t,x,v)| \leq K_{\sigma}|u-v|. \\
\end{split}
\end{equation}

(2) $\sigma$ satisfies a nondegeneracy condition. There exists a constant $c_0>0$ so that

$$|\sigma(t,x,y)| \geq c_0 \text{ for all } t \in [0,T], x \in [0,1], y \in \mathbb{R}.$$

(3) $u_{0} \in C([0,1])$, with $u_{0}(0)=u_{0}(1)=0$. \\

\noindent Then, $u$ has a local version in the sense of ~\cite{N}, and  belongs to the space $L_{1,2}^{loc} \subset D_{1,2}^{loc}$. Moreover, the law of the random variable $u(t,x)$ for $t \in (0,T), x \in (0,1)$ is absolutely continuous with respect to the Lebesgue measure on $\mathbb{R}$.
}
\end{theorem}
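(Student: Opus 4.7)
The approach is to proceed in three stages, following the general localization strategy of \cite{weber, AFK} but with substantial modifications to accommodate the singular behaviour of the Dirichlet heat kernel $G_t(x,y)$ at small times. First, I would establish that $u \in L_{1,2}$ by formally differentiating the mild equation (\ref{weak123}) in the Malliavin sense. For $0 \le s \le t$ and $y \in [0,1]$ the candidate derivative satisfies
\begin{equation*}
\begin{split}
D_{s,y}u(t,x) &= G_{t-s}(x,y)\,\sigma(y,u(s,y)) \\
&\quad + \int_s^t \!\!\int_0^1 G_{t-r}(x,z)\,\partial_u f(z,u(r,z))\, D_{s,y}u(r,z)\, dz\, dr \\
&\quad + \int_s^t \!\!\int_0^1 G_{t-r}(x,z)\,\partial_u \sigma(z,u(r,z))\, D_{s,y}u(r,z)\, W(dz,dr),
\end{split}
\end{equation*}
and vanishes for $s > t$. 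This is made rigorous via a Picard iteration applied simultaneously to $u$ and its Malliavin derivative, closing a Gronwall-type estimate in the $L_{1,2}$-norm using the boundedness of $f$, $\sigma$ and their derivatives together with the finiteness of $\int_0^T \!\int_0^1 G_r(x,y)^2\, dy\, dr$.

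Second, I would construct the local version. Pick a smooth cut-off $\chi_n$ equal to one on $[-n,n]$ and vanishing outside $[-(n+1),n+1]$, define truncated coefficients (for instance $f_n(x,u) := f(x,u)\chi_n(u)$ and similarly for $\sigma_n$, keeping $\sigma_n \ge c_0$), and let $u_n$ denote the corresponding solution, which now has globally controlled dependence and therefore lies genuinely in $L_{1,2}$. Setting $\Omega_n := \{\,\sup_{(t,x)\in [0,T]\times[0,1]} |u(t,x)| \le n\,\}$, pathwise uniqueness for (\ref{smr00}) under hypotheses (1)--(2) gives $u = u_n$ on $\Omega_n$, and the a.s.\ continuity of $u$ forces $\Omega_n \uparrow \Omega$. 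This produces the local version $u \in L_{1,2}^{loc} \subset D_{1,2}^{loc}$ in the sense of \cite{N}.

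Third, for the density I would invoke Theorem 2.1.3 of \cite{N}, which reduces the task to verifying that $\|Du(t,x)\|_{L^2([0,t]\times[0,1])}^2 > 0$ a.s.\ at each fixed $(t,x)$. Fix a small $\delta \in (0,t)$, restrict the integration to $s \in [t-\delta,t]$, and decompose $D_{s,y}u(t,x) = G_{t-s}(x,y)\sigma(y,u(s,y)) + R_\delta(s,y)$, where $R_\delta$ collects the drift and stochastic integral terms above. The nondegeneracy (3) yields
\begin{equation*}
\int_{t-\delta}^t \!\int_0^1 G_{t-s}(x,y)^2 \sigma(y,u(s,y))^2 \, dy\, ds \;\ge\; c_0^2 \int_0^\delta \!\int_0^1 G_r(x,y)^2\, dy\, dr \;\gtrsim\; c_0^2 \sqrt{\delta}
\end{equation*}
for $\delta$ small. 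It therefore suffices to show $\mathbb{E}\!\int_{t-\delta}^t \!\int_0^1 |R_\delta(s,y)|^2\, dy\, ds = o(\sqrt{\delta})$ as $\delta \to 0$; combined with the elementary inequality $(a+b)^2 \ge \tfrac12 a^2 - b^2$ and a Chebyshev-type argument on each $\Omega_n$, this forces strict positivity of $\|Du(t,x)\|_{L^2}^2$, and sending $n \to \infty$ delivers absolute continuity of the law of $u(t,x)$.

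The principal obstacle is the final $o(\sqrt{\delta})$ estimate on $R_\delta$. A direct application of Burkholder--Davis--Gundy combined with crude pointwise control on $G$ yields error terms of the same order $\sqrt{\delta}$ as the leading contribution, so the naive route cannot close the argument. One must instead exploit the $L^2$ (rather than $L^\infty$) integrability of $G_{t-s}$ near the diagonal $s=t$ and identify cancellations between the drift and martingale pieces of $R_\delta$ to gain the missing factor of $\delta^{\alpha}$ for some $\alpha > 0$. This is precisely where the analysis departs from \cite{weber, AFK}, whose kernels enjoy greater regularity, and it is the technical heart of the proof.
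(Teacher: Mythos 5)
Your overall architecture matches the paper's: truncate the coefficients with a cut-off $H_n$, prove $u_n\in L_{1,2}$ via Picard iteration, localize on $\Omega_n=\{\sup|u|<n\}$ to get $u\in L_{1,2}^{loc}\subset D_{1,2}^{loc}$, and reduce the density to the positivity condition of Theorem 2.1.3 of \cite{N} via the decomposition $D_{s,y}u=a+b$ with $(a+b)^2\ge\frac12 a^2-b^2$ and a Chebyshev/Markov argument on the window $[t-\varepsilon,t]$. (Two minor points: your Stage 1 writes the derivative equation with $\partial_u f,\partial_u\sigma$, which presumes differentiability, while the theorem assumes only Lipschitz coefficients --- the paper instead invokes Proposition 1.2.4 of \cite{N} to replace these by bounded random multipliers $m(n),\hat m(n)$; and your truncation $\sigma_n=\sigma\chi_n$ does not ``keep $\sigma_n\ge c_0$'' as stated, since $\chi_n$ vanishes for large arguments --- the paper truncates inside the equation as $H_n(|u_n|)\sigma(y,u_n)$ and uses nondegeneracy of $\sigma$ itself on the localization set.)

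The genuine gap is in your Stage 3, which is exactly the technical heart you yourself flag but do not resolve. First, your sufficient condition $\mathbf{E}\int_{t-\delta}^t\int_0^1|R_\delta|^2\,dy\,ds=o(\sqrt\delta)$ is left entirely unproven, and the mechanism you propose for obtaining it --- ``cancellations between the drift and martingale pieces of $R_\delta$'' --- is not how the argument closes: no such cancellation between the two pieces exists or is used. What the paper actually proves (Lemma \ref{UD82}) is a small-window a priori bound $\sup_{t\in[\hat s-\varepsilon,\hat s]}\mathbf{E}\big(\int_{\hat s-\varepsilon}^{\hat s}\int_0^1\|D_{s,y}u_n(t,\cdot)\|_\infty^2\,dy\,ds\big)\le C\varepsilon^\delta$, $0<\delta<\tfrac12$, and then feeds it back into the equation for $b$, bounding the drift and martingale pieces \emph{separately}: the drift piece is $O(\varepsilon^{2+\delta})$, while the martingale piece is bounded by $\varepsilon^\delta\int_{t-\varepsilon}^t\sum_{k\ge1}e^{-2k^2\pi^2(t-\theta)}\sin^2(k\pi x)\,d\theta$ --- crucially, the \emph{same} exponential series that Parseval produces as the lower bound for $\int a^2$. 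The ``cancellation'' is then a ratio cancellation inside Markov's inequality, organized as a two-case split on $\max\{\varepsilon^{2+\delta},\,\varepsilon^\delta\int_{t-\varepsilon}^t\sum_k e^{-2k^2\pi^2(t-\theta)}\sin^2(k\pi x)\,d\theta\}$, yielding failure probabilities $c\varepsilon^{1+\delta}$ and $c\varepsilon^\delta$ respectively. This device also sidesteps a second obligation your route incurs: your lower bound $\int_0^\delta\int_0^1 G_r(x,y)^2\,dy\,dr\gtrsim\sqrt\delta$ (uniformly for $x$ in a compact subset of $(0,1)$) is a two-sided small-time asymptotic of the Dirichlet kernel that you would have to prove; the paper only needs the much cruder $k=1$ term, $\int a^2\gtrsim\varepsilon\sin^2(\pi x)$, because the series structure retained in the $b$-estimate does the rest. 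Without the analogue of Lemma \ref{UD82} and the structure-preserving bound on the stochastic term, your Stage 3 is a correct reduction but not a proof.
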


\noindent The paper is organized as follows. In section 2, we cover some preliminaries. In section 3, we introduce the localization $u_n$, for which we show existence and uniqueness. In section 4, we apply Malliavin Calculus to prove that $u \in L_{1,2}^{loc}$. In section 5, we prove a positivity condition for the norm of $D_{s,y}u_n$, from which we can establish the positivity condition for the norm of $D_{s,y}u$ and infer the existence of a density for the random variable $u(t,x)$ for any $t \in (0,T), x\in (0,1)$. Throughout the paper, constants $C$ and $c$ may change from one line to the next and we denote by $C_{T}$ any constant that depends on the final time $T$.

 %EXISTENCE ND UNIQUENSS ORIGINAL HEAT

%%%%%%%%%%%%%%%%%%%%%%%
%THE HEAT KERNEL

\section{Preliminaries} 

\bigskip
\subsection{A heat kernel estimate}
We will need the following lemma for the heat kernel, $G_{t-s}(x,y),$ for $t \in [0,T]$ and $ x, y \in [0,1]$.
\begin{lemma} \label{O}
Consider a function $H(s,t;x,y)$ with $0 \le s<t \le T$, $x,y \in \mathbb{R}$, for which there exist positive constants $K,a, b, c$  such that
$$H(s,t;x,y) \le K\frac{1}{|t-s|}exp\Big ( -\alpha \frac{|x-y|^2}{|t-s|}\Big )$$
for all $0 \le s<t \le T$, $x,y \in \mathbb{R}$. Let the linear operator
$$J(\upsilon)(t,x):=\int_{0}^{t}\int_{0}^{1}H(s,t;x,y)\upsilon(s,y)dyds$$ for $t \in [0,T], \mbox{ } x \in [0,1]$, and every $\upsilon \in L^{\infty}([0,T];L^1([0,1])).$ Then the following results hold for  $\rho \in [1,\infty], q \in [1,\rho)$ and $k:=1+\frac{1}{\rho}-\frac{1}{q}.$ \\
\begin{enumerate}
\item $J$ is a bounded linear operator from $L^{\gamma}([0,T];L^q([0,1]))$ into $C([0,T];L^p([0,1])$ for $\gamma>2k^{-1}$.\\
\item For all $t \in [0,T]$, $k>0$ and $\gamma>2k^{-1}$ 
$$\|J(\upsilon)(t,\cdot)\|_{\rho} \le \int_{0}^{t}(t-s)^{\frac{k}{2}-1}\|\upsilon(s,\cdot)\|_{q}ds.$$
\end{enumerate}
In addition, taking $H(s,t;x,y)=G_{t-s}^2(x,y)$, to be the Dirichlet heat kernel, we have that  for all $s<t \in [0,T]$ and $p,q>1$ with $\frac{1}{p}+\frac{1}{q}=1$
$$J(\upsilon)(t) \le \int_{0}^{t}\frac{1}{ (t-s)^{1-\frac{1}{2p}}}\|\upsilon(s,\cdot)\|_{q}ds.$$

\end{lemma}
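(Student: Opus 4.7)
The plan is to treat both parts as consequences of Minkowski's integral inequality combined with Young's convolution inequality, exploiting the fact that the Gaussian upper bound on $H$ depends only on $|x-y|$ and is therefore dominated by a translation-invariant convolution kernel on $\mathbb{R}$.

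For part (2), the first step will be to apply Minkowski's integral inequality in the $x$-variable to bring the $L^\rho$-norm inside the time integral:
$$
\|J(v)(t,\cdot)\|_\rho \le \int_0^t \Bigl\| \int_0^1 H(s,t;\cdot,y)\,v(s,y)\,dy \Bigr\|_\rho\, ds.
$$
Extending $v(s,\cdot)$ by zero outside $[0,1]$ and invoking the hypothesized Gaussian bound, the inner integrand is dominated pointwise by the convolution $\Phi_{t-s} * |v(s,\cdot)|$, where $\Phi_\tau(z):=K\tau^{-1}\exp(-\alpha z^2/\tau)$. Next I would apply Young's convolution inequality on $\mathbb{R}$ with exponents $r,q,\rho$ chosen so that $1+1/\rho=1/r+1/q$, i.e.\ $1/r=k$, obtaining $\|\Phi_{t-s}*|v(s,\cdot)|\|_\rho \le \|\Phi_{t-s}\|_r\,\|v(s,\cdot)\|_q$. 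A direct Gaussian computation via the scaling $z\mapsto \sqrt{(t-s)/(r\alpha)}\,\zeta$ then gives $\|\Phi_{t-s}\|_r \le C(t-s)^{-1+1/(2r)} = C(t-s)^{k/2-1}$, which produces the bound stated in (2).

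For part (1), I would apply H\"older's inequality in time to the estimate of part (2) with conjugate exponents $\gamma,\gamma'$: the resulting time weight $\int_0^t (t-s)^{(k/2-1)\gamma'}\,ds$ is finite precisely when $(k/2-1)\gamma' > -1$, which rearranges to $\gamma > 2/k$. Continuity of $t\mapsto J(v)(t,\cdot)$ with values in $L^\rho$ would then follow from a dominated-convergence argument: the difference $\|J(v)(t',\cdot)-J(v)(t,\cdot)\|_\rho$ can be split into a piece supported on $[0,t-\delta]$, on which the integrand converges pointwise as $t'\to t$ and is uniformly integrable by the Gaussian bound, and a piece on $[t-\delta,t']$, which is small uniformly in $\delta$ thanks to the local integrability of $(t-s)^{(k/2-1)\gamma'}$ near $s=t$. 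Finally, for the Dirichlet heat kernel case, the classical estimate $G_\tau(x,y)\le C\tau^{-1/2}\exp(-c(x-y)^2/\tau)$ on $[0,1]$ yields $G_{t-s}^2(x,y)\le C(t-s)^{-1}\exp(-2c(x-y)^2/(t-s))$, so $H=G_{t-s}^2$ satisfies the hypothesis with $a=1$, and the final displayed bound is obtained by invoking part (2) with the appropriate pairing of $\rho$ and $q$.

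The substantive step is the Young/Gaussian computation producing the sharp factor $(t-s)^{k/2-1}$; once that factor is in hand, the rest — time integrability, the H\"older pairing, and continuity via dominated convergence — is routine. I expect the main care to be in legitimizing Young's inequality on a bounded spatial domain via extension by zero, and in aligning the exponents so that the threshold $\gamma > 2/k$ emerges precisely as the integrability condition for the singular time weight.
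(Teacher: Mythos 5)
Your proposal is correct in substance, but it follows a different route from the paper, which does not prove parts (1) and (2) at all: it simply cites Corollary 3.4 of Gy\"ongy for them, and only proves the final heat-kernel claim directly. For that last claim the paper bounds the Dirichlet kernel by the whole-line Gaussian, $G_\tau(x,y)\le (4\pi\tau)^{-1/2}e^{-(x-y)^2/(4\tau)}$, and applies H\"older's inequality in the space variable with conjugate exponents $p,q$, computing $\bigl(\int_0^1 e^{-p(x-y)^2/(2(t-s))}\,dy\bigr)^{1/p}\le C(t-s)^{1/(2p)}$ to produce the weight $(t-s)^{-(1-\frac{1}{2p})}$. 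Your derivation of the same claim by specializing part (2) to $\rho=\infty$ (so that $k=1-\frac1q=\frac1p$ and $k/2-1=\frac{1}{2p}-1$) is in fact the identical computation, since Young's inequality at $\rho=\infty$ reduces to exactly the paper's H\"older step; your route has the added merit of making explicit the conjugacy $\frac1p+\frac1q=1$, which the lemma's statement omits but the paper's proof uses. Your self-contained proof of (2) via Minkowski's integral inequality, extension by zero, Young's convolution inequality with $\frac1r=k$, and the scaling computation $\|\Phi_\tau\|_r\le C\tau^{k/2-1}$ is sound (note it yields a multiplicative constant $C$, which the lemma's display suppresses but which is harmless everywhere it is used), and your H\"older-in-time pairing correctly recovers the threshold $\gamma>2k^{-1}$ as the integrability condition for the singular weight. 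One caveat: your continuity argument for part (1) assumes the integrand converges pointwise as $t'\to t$, i.e.\ that $H$ is continuous in $t$; this does not follow from the Gaussian upper bound alone (an upper bound is preserved under arbitrary modification of $H$ in $t$), so strictly speaking continuity requires an additional regularity hypothesis on $H$ --- satisfied by the intended kernels $G$ and $G^2$, and implicitly supplied in Gy\"ongy's setting, but worth stating if you present (1) as holding for every $H$ dominated as in the hypothesis.
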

\proof
Conditions (1) and (2) have been proven in the corollary (3.4) of \cite{Gyongy}. Let us prove the last part, that is when $H(s,t;x,y)=G_{t-s}^2(x,y)$. Firstly, we can bound the function $G_{t}(x,y)$ by the heat kernel on $\mathbb{R}$.
We can write:
$$
G_{t}(x,y) \leq \frac{ e^{-\frac{(x-y)^2}{4t}}}{\sqrt{4 \pi t}}.
$$
 So we have the estimate for $p,q>1$ satisfying that $\frac{1}{p}+\frac{1}{q}=1$:
 \begin{equation} \nonumber
 \begin{split}
J(\upsilon)(t) &\le \int_{0}^{t} \int_{0}^{1} \frac{e^{-\frac{(x-y)^2}{2(t-s)}}}{4\pi (t-s)} \upsilon(s,y) d y d s \leq \int_{0}^{t} \frac{1}{4 \pi (t-s)} \int_{0}^{1} e^{-\frac{(x-y)^2}{2 (t-s)}} \upsilon(s,y) d y d s \\
& \leq \int_{0}^{t} \frac{1}{4 \pi (t-s)} \left(\int_{0}^{1} (e^{-\frac{(x-y)^2}{2 (t-s)}})^{p} d y\right)^{\frac{1}{p}}  \left(\int_{0}^{1} (\upsilon(s,y) )^{q} d y\right)^{\frac{1}{q}} d s.
\end{split}
\end{equation}

Moreover, we have that
 
\begin{equation} \nonumber
\int_{0}^{1} (e^{-\frac{(x-y)^2}{2 (\tau-s)}})^{p} d y 
\leq \sqrt{2(\tau-s)} \frac{1}{\sqrt{p}}\int_{-\infty}^{+\infty} e^{-z^2} d z  \leq C \sqrt{(\tau-s)} 
\end{equation}
for a constant $C=\sqrt{\frac{2\pi}{p}}$.
So, in the end, we get 
$$
\begin{aligned}
&J(\upsilon)(t)  \leq 
C \int_{0}^{t} \frac{1}{4 \pi (t-s)} 
(\sqrt{t-s})^{\frac{1}{p}}\left(\int_{0}^{1} (\upsilon(s,y))^{q} d y\right)^{\frac{1}{q}} d s \\
& =C \int_{0}^{t} \frac{1}{ (t-s)^{1-\frac{1}{2p}}} 
\left(\int_{0}^{1} (\upsilon(s,y))^{q} d y\right)^{\frac{1}{q}} d s=  C \int_{0}^{t} \frac{1}{ (t-s)^{1-\frac{1}{2p}}} 
\| \upsilon (s,\cdot) \|_{L^q} d s.
\end{aligned}
$$

%\begin{remark} \label{infinitycase}
%We denote the sup-norm  by $\|\cdot\|_{\infty}$, and  for $f : [0,T]\times[0,1] \to \mathbb{R}$, 

%\begin{equation} \nonumber 
%\begin{split}
%&\|f(t,\cdot)\|_{\infty}:=\sup_{x \in [0,1]}|f(t,x)| \\
%&\|f(\cdot,x)\|_{\infty}:=\sup_{t \in [0,T]}|f(t,x)|.
%\end{split}
%\end{equation}
%\end{remark}
%%%%%%%%%%%%%%%%%%%%%%%%%%%

%THE LOCALIZATION SPACE

\subsection{Regularity \& localization of a stochastic process $u$}
In this section, we construct the localization space $\Omega_n$ of $\Omega$ by choosing an appropriate cut-off function for $f$ and $\sigma$.  For the continuous stochastic process $u$, we can localize the functions $f:=f(t,x,u(t,x)), \sigma:=\sigma(t,x,u(t,x))$ where $f : [0,T] \times [0,1] \times \mathbb{R} \to \mathbb{R}$ and $\sigma : [0,T] \times [0,1] \times \mathbb{R} \to \mathbb{R}$ as in (\cite{AFK}, \cite{AFM}, \cite{weber}). That is, suppose that $H_n: [0, +\infty] \rightarrow [0, +\infty]$ is a  $C^1$ cut-off sequence satisfying for any $n>0$ that $|H_n|\le 1 \mbox{ and }|H^\prime_n|\le 2,$ with
\begin{equation} \label{truncatedfction451}
H_n(x):=
\begin{cases}
1  &     \textrm{for  $|x|< n $ }   \\
0 & \textrm{for $|x|\ge n+1 $}.
\end{cases}
\end{equation}
 Then $H_n(|y|)f(t,x,y), H_n(|y|)\sigma(t,x,y)$ are $C^1$ functions with bounded derivatives. The derivatives of $H_{n}(|y|)f(t,x,y)$ (and of $H_{n}(|y|)\sigma(t,x,y)$ respectively) are bounded by a constant that depends on $n$ and on the Lipshitz constants for $f,\sigma$.
 
 \begin{remark}
 It is well known (for instance from Theorem 2.4.3 in \cite{N}) that $(\ref{smr00})$ has a unique adapted solution $\{u(t, x) ; t\in [0,T], x\in [0,1]\}$ with continuous paths.
 We want to localize the continuous random variable $u:=u(t,x)$ for $t \in [0,T]$, $x \in [0,1]$ using a truncation argument, as in \cite{AFK}, \cite{AFM}, \cite{weber}.
\end{remark}
We define the spaces
\begin{equation} \label{Omega4}
\Omega_{n}:= \Big{\{} \omega\in\Omega:\;\sup_{t \in [0,T]}\sup_{x \in [0,1]} |u(t,x;\omega)| < n
\Big{\}}
\end{equation}
such that $\Omega_1\subseteq\Omega_2\subseteq\cdots\subseteq \Omega.$
We apply a piecewise approximation on $(\ref{smr00})$ by considering
\begin{equation} \label{mild0}
\begin{split}
u_n(t,x) &=\int_{0}^{1} G_t(x,y)u_0(y)dy+\int_{0}^{t}\int_{0}^{1} G_{t-s}(x,y) H_n(|u_n(s,y)|)f(s,y,u_n(s,y))) dy ds \\
&+ \int_{0}^{t}\int_{0}^{1}G_{t-s}(x,y)H_n(|u_n(s,y)|)\sigma(s,y,u_n(s,y))W(dy,ds).
\end{split}
\end{equation}
Now we define the localization spaces for the random process as in (\cite{N}, \cite{AFK}). 

\noindent The space $D_{1,2}$: A process $u(t,x) \in L^{2}(\Omega \times [0,T] \times [0,1])$ belongs to the space $D_{1,2}$ if its Malliavin derivative exists and $u$ satisfies for any $t \in [0,T], x \in [0,1]$:   

\begin{equation} \label{fokfogiiggot9g}
\|u\|^2_{D_{1,2}}=\mathbf{E} (|u|^2)+\mathbf{E} \Big (\|D_{\cdot,\cdot}u\|^2_{L^2 \left (  [0,T] \times [0,1] \right )} \Big )< \infty,
\end{equation}
where
\begin{equation} \label{norm3}
\|D_{\cdot,\cdot}u(t,x)\|^2_{L^2 \left ([0,T] \times [0,1] \right )}=\int_{0}^{T} \int_{0}^{1} |D_{s,y} u(t,x)|^2 dyds.
\end{equation}

Note that $D_{1,2}$ is a Hilbert space with inner product
$$<f,g>:=\mathbf{E}(fg)+\mathbf{E}(<D_{\cdot,\cdot}f,D_{\cdot,\cdot}g>_{L^2 \left (  [0,T] \times [0,1]\right )}).$$

%Thus, $D_{1,2}$ is the
%set of random variables $u$ such that the Malliavin derivative
 %$D_{s,y}u(t,x)$ exists, for any
%$y\in [0,1]$ and any $s\geq 0$ and any
%$(t,x) \in ([0,T] \times [0,1])$, and satisfies the two previous relations. \\

%%%%%%L_1,2

\noindent The space $L_{1,2}$: A process $u(t,x) \in L^2(\Omega \times   [0,T] \times [0,1])   $ belongs to the space  $L_{1,2}$ \index{$L_{1,2}$} if  $u\in D_{1,2}$ and  the Malliavin derivative $D_{s,y}u(t,x)$ satisfies
\begin{equation} \label{SJ29d}
\mathbf{E} \bigg (\int_{0}^{T} \int_{0}^{1} \int_{0}^{T} \int_{0}^{1} |D_{s,y} u(t,x)|^2 dyds dxdt \bigg )< \infty.
\end{equation}

We note that in the above definitions $u \in L^2(\Omega \times  [0,T] \times [0,1]) $ means that
$$\|u\|_{L^2(\Omega \times  [0,T] \times [0,1]) }:=
\Big{(}\mathbf{E}\Big{(}\int_0^T\int_{0}^{1}|u(t,x)|^2dxdt\Big{)}\Big{)}^{1/2}<\infty.$$
%\begin{remark} \label{newcareersl0}
%If $u$ belongs to the space $L_{1,2}$, then $u$ belongs to the space $D_{1,2}$ as well. In addition, the localization of the domains $D_{1,2}$ and $L_{1,2}$ (i.e.,., the spaces $D^{loc}_{1,2}$ and $L^{loc}_{1,2}$) demand the existence of appropriate stochastic process $u_n$ which firstly it belongs to the general spaces $L_{1,2}$ and $D_{1,2}$ respectively, and then it satisfies the specified conditions each of local definition (see below). 
%\end{remark}

%%%%%%%D_loc 1,2
\noindent The space $D^{loc}_{1,2}$: A random variable  $\{u(t,x)\}_{(t,x) \in ( [0,T] \times [0,1])}$ belongs to the space $D^{loc}_{1,2}$ if there exists a pair of sequences  $\{ (\Omega_n, u_n ); \Omega_{n} \subseteq \Omega \}_{n \ge 1} \subset \mathcal{F} \times D_{1,2}$, that satisfies
\begin{enumerate}
\item $\Omega_n \uparrow \Omega$ a.s, 
\item $u=u_n$ a.s on $\Omega_n. $
\end{enumerate}
%%%%%%%L_loc
\noindent The space $L^{loc}_{1,2}$: A random variable $\{u(t,x)\}_{(t,x) \in ( [0,T] \times [0,1])}$ belongs to the space $L^{loc}_{1,2}$ if there exists a pair of sequences  $\{ (\Omega_n, u_n ); \Omega_{n} \subseteq \Omega \}_{n \ge 1} \subset \mathcal{F} \times L_{1,2}$, that satisfies
\begin{enumerate}
\item $\Omega_n \uparrow \Omega$ a.s, 
\item $u=u_n$ a.s. on $\Omega_n. $
\end{enumerate}

\begin{remark}\label{malliavin}
According to these definitions of the spaces $L_{1,2}$, $D_{1,2}$, and their localizations, it follows that $L^{loc}_{1,2} \subseteq D^{loc}_{1,2}$. In addition, $L_{1,2} \subset D_{1,2}$, and then a localization of the form $(\Omega_n, u_n)$ of $u$ in $L_{1,2}$ is also a localization into the space $D_{1,2}$. The Malliavin derivative of $u$ is well defined through the Malliavin derivative of $u_n$, i.e., $D_{s,y}u_n=D_{s,y}u$ a.s. on $\Omega_n$.

\end{remark}

%%%%%%%%%%%%%%%%%%%%%%%
\section{Existence \& uniqueness} \label{similarres}

\begin{lemma} \label{Ox5} Let $f, \sigma$ be continuously differentiable functions with bounded derivatives. Then, the equation $(\ref{mild0})$ has a unique solution $u_{n}$. Additionally the following condition holds, 
\begin{equation} \label{nondegenerate0}
\sup_{t \in [0,T]}\sup_{x \in [0,1]} \mathbf{E} (|u_n(t,x)|^{2})<\infty.
\end{equation}
\end{lemma}
\proof 
We will construct a Cauchy sequence through a Picard iteration scheme which will converge to the solution $u_{n}$ of $(\ref{mild0})$. For given $n$ we define  
\begin{equation} \nonumber
u_{n,0}(t,x):=G_tu_0(x)
\end{equation} 
and then iteratively $(\ref{mild0})$, for $(t,x) \in (  [0,T]\times [0,1] )$, $k \ge 1$ and $s \ge 0$ such that $s < t$, 
\begin{equation} \label{statespotkk}
\begin{split}
u_{n,k+1}(t,x) &=\int_{0}^{1} G_t(x,y)u_0(y)dy+\int_{0}^{t}\int_{0}^{1} G_{t-s}(x,y) H_n(|u_{n,k}(s,y)|)f(s,y,u_{n,k}(s,y)) dy ds \\
&+ \int_{0}^{t}\int_{0}^{1}G_{t-s}(x,y)H_n(|u_{n,k}(s,y)|)\sigma(s,y,u_{n,k}(s,y))W(dy,ds).
\end{split}
\end{equation}
Then for $k \ge 1$,
\begin{equation} \nonumber
u_{n,k+1}(t,x)-u_{n,k}(t,x) = \int_{0}^{t}\int_{0}^{1}G_{t-s}(x,y) 
  (\hat{H}_f )dyds + \int_{0}^{t}\int_{0}^{1}G_{t-s}(x,y) (\hat{H}_{\sigma} )W(dy,ds)
\end{equation} 
for
$$\hat{H}_f:=H_n(|u_{n,k}(s,y)|)f(s,y,u_{n,k}(s,y))-H_n(|u_{n,k-1}(s,y)|)f(s,y,u_{n,k-1}(s,y)),$$
$$\hat{H}_{\sigma}:=H_n(|u_{n,k}(s,y)|)\sigma(s,y,u_{n,k}(s,y))- 
H_n(|u_{n,k-1}(s,y)|)\sigma(s,y,u_{n,k-1}(s,y)).$$

We get 
\begin{equation} \nonumber
|u_{n,k+1}(t,x)-u_{n,k}(t,x)|\le \int_{0}^{t}\int_{0}^{1}|G_{t-s}(x,y)| 
  |\hat{H}_f  |dyds +\Big | \int_{0}^{t}\int_{0}^{1}G_{t-s}(x,y)  (\hat{H}_{\sigma} )W(dy,ds)\Big |.
\end{equation}
Then for some $c>0$,
\small
\begin{equation} \nonumber
\mathbf{E} \left(|u_{n,k+1}(t,x)-u_{n,k}(t,x)|^{2} \right) \le c \mathbf{E} \left(\Big ( \int_{0}^{t}\int_{0}^{1}|G_{t-s}(x,y)||\hat{H}_f| dyds \Big )^{2} \right) +c \mathbf{E} \left( \Big | \int_{0}^{t}\int_{0}^{1}G_{t-s}(x,y) (\hat{H}_{\sigma})W(dy,ds)\Big |^{2} \right).
\end{equation}
Applying the Burkholder-Davis-Gundy inequality on the stochastic term then gives us that
\begin{equation} \nonumber
\mathbf{E}  (|u_{n,k+1}(t,x)-u_{n,k}(t,x)|^{2}  )\leq c \mathbf{E} \Big ( \Big ( \int_{0}^{t}\int_{0}^{1}|G_{t-s}(x,y)||\hat{H}_{f}| dyds \Big )^{2} \Big )+c \mathbf{E} \Big (\int_{0}^{t}\int_{0}^{1}(G_{\tau-s}(x,y))^2 (\hat{H}_{\sigma})^{2} dyds \Big )
\end{equation}
and 
\begin{equation} \label{p_{n,k}}
\begin{split}   
& \sup_{x \in [0,1]}\mathbf{E} (|u_{n,k+1}(t,x)-u_{n,k}(t,x)|^{2}) \leq c \sup_{x \in [0,1]} \mathbf{E} \Big ( \Big ( \int_{0}^{t}\int_{0}^{1}|G_{t-s}(x,y)||\hat{H}_{f}| dyds \Big )^{2} \Big )\\
&+c \sup_{x \in [0,1]}\mathbf{E} \Big ( \Big(\int_{0}^{t}\int_{0}^{1}(G_{\tau-s}(x,y))^2 (\hat{H}_{\sigma})^{2} dyds\Big )\Big ) := c\sup_{x \in [0,1]}\Lambda_{1}(t,x)+ c\sup_{x \in [0,1]}\Lambda_{2}(t,x).
\end{split}
\end{equation}
%THE TERM Q1 Q1 Q1 Q1 Q1 Q1 Q1 Q1 Q1 Q1 Q1 Q 1 Q1Q 1
We set $P_{n,k}(t)=\sup_{ y\in [0,1]} \mathbf{E} \bigg(|u_{n,k}(t,y)-u_{n,k-1}(t,y)|^{2}\bigg)$.  \\
We first bound the term $\Lambda_1(t,x)$. As we mentioned before, the truncated function $H_{n}(|y|)f(t,x,y)$ is Lipschitz in the $y$-variable. We have
\begin{equation} \nonumber
\begin{split}
|\hat{H}_f|&:=|H_n(|u_{n,k}(s,y)|)f(s,y,u_{n,k}(s,y))-H_n(|u_{n,k-1}(s,y)|)f(s,y,u_{n,k-1}(s,y))| \\
&\le |u_{n,k}(s,y)-u_{n,k-1}(s,y)|.
\end{split}
\end{equation}
Hence,
\begin{equation} \nonumber
\Lambda_1(t,x) \leq  c \mathbf{E} \Big ( \Big ( \int_{0}^{t}\int_{0}^{1}|G_{t-s}(x,y)| 
  |u_{n,k}(s,y)-u_{n,k-1}(s,y)|dyds \Big )^{2} \Big ).
\end{equation}

We have that 
\begin{equation} \nonumber
\begin{split}
&\mathbf{E} \Big ( \Big ( \int_{0}^{t}\int_{0}^{1}|G_{t-s}(x,y)| 
  |u_{n,k}(s,y)-u_{n,k-1}(s,y)|dyds \Big )^{2} \Big )\leq \\
&\mathbf{E}  \Big ( \int_{0}^{t}  \frac{1}{t-s}\int_{0}^{1}e^{-\frac{(x-y)^{2}}{2\pi (t-s)}} dy ds \int_{0}^{t} \int_{0}^{1}
  |u_{n,k}(s,y)-u_{n,k-1}(s,y)|^{2} dyds \Big ) \leq  \\
&\mathbf{E} \Big ( \int_{0}^{t}  \frac{1}{\sqrt{t-s}} ds \int_{0}^{t} \int_{0}^{1}
  |u_{n,k}(s,y)-u_{n,k-1}(s,y)|^{2} dyds \Big ) \\
  &\leq C_{T} \int_{0}^{t} \int_{0}^{1}  \mathbf{E} \Big ( 
  |u_{n,k}(s,y)-u_{n,k-1}(s,y)|^{2} \Big) dy  ds \\
  &\leq C_{T} \int_{0}^{t} \sup_{y \in [0,1]} \mathbf{E} \Big ( 
  |u_{n,k}(s,y)-u_{n,k-1}(s,y)|^{2} \Big)  ds \\
  & \leq C_{T} \int_{0}^{t} P_{n,k}(s) ds\\
  & \leq C_{T} \left( \int_{0}^{t} P_{n,k}(s)^{q} ds \right)^{\frac{1}{q}},\\
\end{split}
\end{equation}

\noindent where we take an exponent $q$ such that $\frac{1}{p}+\frac{1}{q}=1$ for some $p \in (1,2)$.

\noindent Now we bound the term $\Lambda_2(t,x)$. We have that the truncated function $H_{n}(|y|)\sigma(t,x,y)$ is Lipschitz in the $y$-variable, thus

\begin{equation} \nonumber
\begin{split}
|\hat{H}_{\sigma}|&:=|H_n(|u_{n,k}(s,y)|)\sigma(s,y,u_{n,k}(s,y))-H_n(|u_{n,k-1}(s,y)|)\sigma(s,y,u_{n,k-1}(s,y))| \\
&\le |u_{n,k}(s,y)-u_{n,k-1}(s,y)|.
\end{split}
\end{equation}

Hence,
\begin{equation} \nonumber
\begin{split}
\Lambda_2(t,x) \le c \mathbf{E} \Big (\int_{0}^{t}\int_{0}^{1}|G_{t-s}(x,y)|^2  |u_{n,k}(s,y)-u_{n,k-1}(s,y) |^{2} dyds\Big ).
\end{split}
\end{equation}

We have that

\begin{equation} \nonumber
\begin{split}
&\mathbf{E} \Big (\int_{0}^{t}\int_{0}^{1}|G_{t-s}(x,y)|^2  |u_{n,k}(s,y)-u_{n,k-1}(s,y) |^{2} dyds\Big )\leq 
c\mathbf{E} \Big (\int_{0}^{t}\int_{0}^{1}\frac{e^{-\frac{(x-y)^{2}}{2(t-s)}}}{t-s} |u_{n,k}(s,y)-u_{n,k-1}(s,y) |^{2} dyds\Big ) \leq \\
&\leq c \int_{0}^{t}\sup_{y \in [0,1]} \mathbf{E} \Big (|u_{n,k}(s,y)-u_{n,k-1}(s,y) |^{2} \Big )\int_{0}^{1}\frac{e^{-\frac{(x-y)^{2}}{2(t-s)}}}{t-s}  dyds\leq \\
&\leq c\int_{0}^{t} \sup_{y \in [0,1]} \mathbf{E} \Big (|u_{n,k}(s,y)-u_{n,k-1}(s,y) |^{2}\frac{1}{t-s} \Big)\int_{0}^{1}e^{-\frac{(x-y)^{2}}{2(t-s)}} dy ds\\
&\leq c\int_{0}^{t} \sup_{y \in [0,1]} \mathbf{E} \Big (|u_{n,k}(s,y)-u_{n,k-1}(s,y) |^{2} \Big ) \frac{1}{\sqrt{t-s}} ds \\
& \leq c\int_{0}^{t} P_{n,k}(s) \frac{1}{\sqrt{t-s}} ds \leq C_{T} \left(\int_{0}^{t} P_{n,k}(s)^{q} ds \right)^{\frac{1}{q}},
\end{split}
\end{equation}
where we took exponent $q$ such that $\frac{1}{p}+\frac{1}{q}=1$ for $p \in (1,2)$ so that $\frac{1}{(t-s)^{\frac{p}{2}}}$ is integrable.

We can then go back to $(\ref{p_{n,k}})$ and also get for $R_{n,k}(t)=\sup_{\bar{s}\in [0,t], y \in [0,1]} \left( \mathbf{E} \Big (|u_{n,k}(\bar{s},y)-u_{n,k-1}(\bar{s},y) |^{2}\right)$

\begin{equation} 
\begin{split}
& P_{n,k+1}(t)^{q} \le C_{T}\left(\int_{0}^{t}P_{n,k}(s)^{q} d s\right) \\
& R_{n,k+1}(t)^{q} \le C_{T}\left(\int_{0}^{t}P_{n,k}(s)^{q} d s\right).\\
\end{split}
\end{equation}

By repeatedly applying the above inequality for the integrand in the right-hand side, we get

\begin{equation}
\begin{split}
& R_{n,k+1}(t)^{q} \leq C_{T} \int_{0}^{t} \int_{0}^{s_{k-1}} \cdots \int_{0}^{s_{1}} P_{n,1}(s)^{q}  d s ds_{1}\ldots d s_{k-2} ds_{k-1} \\
&= C_{T} \int_{0}^{t} \int_{0}^{s_{k-1}} \cdots \int_{0}^{s_{1}}  \left( \sup_{y \in [0,1]} \mathbf{E} \Big (|u_{n,1}(s,y)-u_{n,0}(s,y) |^{2} \Big) \right)^{q} d s ds_{1}\ldots d s_{k-2} ds_{k-1}\\
& \leq C_{T} \int_{0}^{t} \int_{0}^{s_{k-1}} \cdots \int_{0}^{s_{1}} \left(\sup_{y \in [0,1] s \in [0,t]} \mathbf{E} \Big (|u_{n,1}(s,y)-u_{n,0}(s,y) |^{2} \Big) \right)^{q} d s ds_{1}\ldots d s_{k-2} ds_{k-1}\\
& \leq (R_{n,1}(T)^{q}) \frac{t^{k}}{k!} .
\end{split}
\end{equation}

\noindent Finally, this gives us that  
$\sum_{k=1}^{\infty}\sup_{\bar{s}\in [0,t], y \in [0,1]} \mathbf{E} \Big (|u_{n,k}(\bar{s},y)-u_{n,k-1}(\bar{s},y) |^{2}\Big)< \infty$ and we can conclude that the sequence $u_{n,k}(t,x)$ is Cauchy in the norm of the space $P_{2,\infty}[0,T]$  given by
\begin{equation} \nonumber
\|\upsilon\|_{P_{2, \infty}}:=\sup_{x \in [0,1], t \in [0,T]}\mathbf{E} \left( |\upsilon(t,x)|^{2} \right).
\end{equation}
Thus, there exists a function $u_n$, such that $u_{n,k} \rightarrow u_n$  in this norm, so that 
\begin{equation} \label{r4r4t53}
\lim_{k \rightarrow \infty} \sup_{t\in [0,T], x\in [0,1]}\mathbf{E} ( |u_{n,k}(t,x)-u_{n}(t,x)|^{2})=0.
\end{equation}
As a consequence, the relation $(\ref{statespotkk})$ gives us
\begin{equation} \label{un}
\begin{split}
u_{n}(t,x) &=\int_{0}^{1} G_t(x,y)u_0(y)dy+\int_{0}^{t}\int_{0}^{1} G_{t-s}(x,y) H_n(|u_{n}(s,y)|)f(s,y,u_{n}(s,y)) dy ds \\
&+ \int_{0}^{t}\int_{0}^{1}G_{t-s}(x,y)H_n(|u_{n}(s,y)|)\sigma(s,y,u_{n}(s,y))W(dy,ds).
\end{split}
\end{equation}
%%%%%%%%%%%%%%UNIQUENESSSSS
%Let's check the uniqueness of the solution u
Let us now prove the uniqueness of the solution $u_n$ to the equation $(\ref{mild0})$. Suppose that $\omega_n$ is another solution of $(\ref{mild0})$, then,
\begin{equation} \nonumber
\begin{split}
&u_n(t,x)-\omega_n(t,x)= \\
&=\int_{0}^{t}\int_{0}^{1} G_{t-s}(x,y) \Big (H_n(|u_n(s,y)|)f(s,y,u_n(s,y)))-H_n(|\omega_n(s,y)|)f(s,y,\omega_n(s,y))) \Big )dy ds \\
&+ \int_{0}^{t}\int_{0}^{1}G_{t-s}(x,y)\Big (H_n(|u_n(s,y)|)\sigma(s,y,u_n(s,y))-H_n(|\omega_n(s,y)|)\sigma(s,y,\omega_n(s,y))\Big )W(dy,ds).
\end{split}
\end{equation}
Following the same steps as in the existence proof when we can extract the equation
\begin{equation} \nonumber
\left(\sup_{x \in [0,1], \bar{s}\in [0,t]}\mathbf{E} \Big (|u_{n}(\bar{s},x)-\omega_{n}(\bar{s},x)|^{2} \Big )\right)^{q} \le C_{T} \int_{0}^{t}  \left(\sup_{x \in [0,1],\bar{s}\in [0,s]}\mathbf{E} \Big ( |u_{n}(\bar{s},x)-\omega_{n}(\bar{s},x)|^{2}  \Big ) \right)^{q} ds. 
\end{equation}
Gronwall's inequality then gives us
\begin{equation} \nonumber
 \sup_{x \in [0,1], \bar{s}\in [0,t]}\mathbf{E} \Big (|u_{n}(\bar{s},x)-\omega_{n}(\bar{s},x)|^{2} \Big)=0
\end{equation}
for any $t \in [0,T]$.
Then, the processes $u_n$ and $\omega_n$ are  equivalent, i.e., they satisfy that 
$$P \Big (\omega \in  \Omega : u_{n}(t,x;\omega)=  \omega_{n}(t,x;\omega) \Big )=1$$ for any $t \in [0,T]$ and any $x \in [0,1]$.
Moreover, considering the continuous versions of $u_n$ and $\omega_n$, they are indistinguishable, since any two processes are indistinguishable in a set if they are equivalent and a.s. continuous in that 
set.
We have now obtained the uniqueness of the solution, $u_n$, of the equation $(\ref{mild0})$, with paths uniquely defined a.s.

%%%%%%%%%%%%%%%%%%%%%%%%%%%%%%%

\section{Malliavin calculus}
In this section we show that $u$ has a local version, meaning that it belongs to the space $L_{1,2}^{loc} \subset D_{1,2}^{loc}$. 
%dddddddddddddddddddd
Therefore, it is necessary to proceed to Malliavin calculus and prove, according to the $L_{1,2}$ definition, that $u_n \in L_{1,2}$. We follow the approach of \cite{AFK},\cite{weber} to extend the regularity of the solution to $({\ref{smr00}})$ to the space $L_{1,2}^{loc}$. The $D_{1,2}$ regularity of $u$ is proven in \cite{N}. First, we must take the Malliavin derivative on the stochastic equation $(\ref{mild0})$. This requires the $D_{1,2}$-regularity of the process $\{u_{n,k}\}_{ k \in \mathbb{N}}$, which is why we proceed by induction. The induction hypothesis at step $k$ is the $D_{1,2}$ regularity of the process $\{u_{n,k}\}_{ k \in \mathbb{N}}$. We will thus prove that the Cauchy sequence, $u_{n,k}$, belongs to the space $D_{1,2}$ for all $(t,x) \in ( [0, T] \times [0,1])$, using induction in combination with the Picard iteration scheme.
\bigskip

We now proceed with the proof of the first part of Theorem $(\ref{theorem21})$. We split the proof into the following four steps.\\

{\it{Step 1: Extraction of Malliavin derivative and the induction hypothesis}}.\\

For $ k=0$, we have the deterministic function of the form $u_{n,0}(t,x)$. Then, the Malliavin derivative yields that $Du_{n,0}=0$, and we trivially have that $u_{n,0} \in D_{1,2}.$ \\
We suppose that $(IH)$ holds for $k \ge 0$ and for any $I \le k$. 
\begin{equation} \label{UD27}
(IH):=
\begin{cases}
 u_{n,I} \in D_{1,2} \mbox{ for every } (t,x) \in ([0,T] \times [0,1]) \\	
\sup_{I \le k} \sup_{t \in [0,T]} \mathbf{E} \bigg ( \int_{0}^{t}\int_{0}^{1} \| D_{s,y}u_{n,I}(t, \cdot)\|^2_{L^{2}([0,1])} dyds \bigg )<\infty \\
\sup_{I \leq k}\sup_{t \in [0,T], x \in [0,1]} \mathbf{E} \bigg ( \int_{0}^{t}\int_{0}^{1} | D_{s,y}u_{n,I}(t, x)|^2 dyds \bigg )<\infty.
\end{cases}
\end{equation}

\noindent It suffices to show that $(IH')$ holds for any $I \le k+1$.  
\begin{equation} \label{UD28}
(IH'):=
\begin{cases}
 u_{n,I} \in D_{1,2} \mbox{ for every } (t,x) \in( [0,T] \times [0,1])	 \\	
\sup_{I \leq k+1}\sup_{t \in [0,T]} \mathbf{E} \bigg ( \int_{0}^{t}\int_{0}^{1} \| D_{s,y}u_{n,I}(t,\cdot)\|^2_{L^{2}([0,1])} dyds \bigg )<\infty \\
\sup_{I \leq k+1}\sup_{t \in [0,T], x \in [0,1]} \mathbf{E} \bigg ( \int_{0}^{t}\int_{0}^{1} | D_{s,y}u_{n,I}(t, x)|^2 dyds \bigg )<\infty.
\end{cases}
\end{equation}

\noindent We will apply the Malliavin derivative to $(\ref{statespotkk})$. We note that by the induction hypothesis, $u_{n, I}(t,x) \in D_{1,2}$ for all $I \leq k$.
Therefore, 

\begin{equation} \label{UD33}
\begin{split}
 D_{s,y}&u_{n,k+1}(t,x) =G_{t-s}(x,y)H_n(|u_{n,k}(s,y)|)\sigma(s,y,u_{n,k}(s,y)) \\
&+\int_{s}^{t}\int_{0}^{1}G_{t-\theta}(x,r)D_{s,y}\Big \{ H_n(|u_{n,k}(\theta,r)|)f(\theta, r,u_{n,k}(\theta,r)) \Big \} drd \theta  \\
&+ \int_{s}^{t}\int_{0}^{1} D_{s,y} \Big \{( H_n(|u_{n,k}(\theta,r)|)\sigma(\theta, r,u_{n,k}(\theta,r))\Big \}G_{t-\theta}(x,r) W(dr,d \theta).
\end{split} 
\end{equation}
As mentioned previously, the Malliavin derivative acts on the time interval $[s,t]$, where $s,t \in [0, T]$. Thus the double-integral of $(\ref{UD27})$, for $s\in [0,t]$, coincides with the same double-interval for $s \in [0,T]$. Then, based on the induction argument $(\ref{UD27})$ we aim to show that for all $k$, $u_{n,k} \in D_{1,2}$ and 
\begin{equation} \label{posnkw3of}
\sup_{t \in [0,T]}\mathbf{E} \bigg ( \int_{0}^{T}\int_{0}^{1} \| D_{s,y}u_{n,k}(t,\cdot)\|^2_{L^{2}([0,1])} dyds \bigg )<\infty.
\end{equation}
 Since $f,\sigma$ are continuously differentiable functions and Lipschitz, we can use the Propositions 1.2.3 and 1.2.4 of \cite{N}. Since $H_n f, H_n \sigma$ are Lipschitz and we have by the induction hypothesis that $u_{n,k} \in D_{1,2}$, Proposition 1.2.4 applies and gives us that there exist random functions $m_2(n,k),\hat{m_2}(n,k)$, bounded by constants $C_{f}$ and $C_{\sigma}$ (that do not depend on $n$) respectively such that 
\begin{equation} \label{pod1}
\begin{split}
D_{s,y}[H_n(|u_{n,k}&(\theta,r)|)f(\theta, r,u_{n,k}(\theta,r))]=m_2(n,k)(\theta,r)D_{s,y}u_{n,k}(\theta,r),
\end{split}
\end{equation}
 $$D_{s,y}[H_n(|u_{n,k}(\theta,r)|)\sigma(\theta, r,u_{n,k}(\theta,r))]=\hat{m}_2(n,k)(\theta,r)D_{s,y}u_{n,k}(\theta,r),$$ with $|m_2(n,k)(t,x)| \le C_{f}$, $|\hat{m}_2(n,k)(t,x)| \le C_{\sigma}$ for any $t \in [0,T]$ and $x \in [0,1]$. Replacing the Malliavin derivative in $(\ref{UD33})$ gives us,
\begin{equation} \label{OoOf}
\begin{split}
 D_{s,y}&u_{n,k+1}(t,x) =G_{t-s}(x,y)H_n(|u_{n,k}(s,y)|)\sigma(s,y,u_{n,k}(s,y)) \\
&+\int_{s}^{t}\int_{0}^{1}G_{t-\theta}(x,r)m_2(n,k)(r,\theta)D_{s,y}u_{n,k}(\theta,r) drd \theta  \\
&+ \int_{s}^{t}\int_{0}^{1} G_{t-\theta}(x,r) \hat{m}_2(n,k)(r,\theta)D_{s,y}u_{n,k}(\theta,r) W(dr,d \theta),
\end{split} 
\end{equation}
and $D_{s,y}u_{n,k+1}(t,x)=0$ for $s>t$.\\

{\it{Step 2: The Cauchy sequence $u_{n,k}$ belongs to the space $D_{1,2}$ for all $k$.}} \\

For all $x \in [0,1]$, $I \leq k$ we have that since $|H_{n}(u)\sigma(s,y,u)|$ is bounded,
\begin{equation}  \nonumber
\begin{split}
|D_{s,y}&u_{n,I+1}(t,x)|^{2} \le c \Big [\int_{s}^{t}\int_{0}^{1}|G_{t-\theta}(x,r)||m_2(n,k)(\theta,r)||D_{s,y}u_{n,I}(\theta,r)| dr d \theta  \Big ]^{2} \\
&+c \Big |\int_{s}^{t}\int_{0}^{1}G_{t-\theta}(x,r)\hat{m}_2(n,k)(\theta,r)D_{s,y}u_{n,I}(\theta,r) W(dr,d \theta)  \Big |^{2}\\
&+c |G_{t-s}(x,y)|^{2}
\end{split} 
\end{equation}
for a constant $c>0$. Then since $m_2$ and $\hat{m}_2$ are bounded we get,
\begin{equation}  \label{blue1}
\begin{split}
\mathbf{E} \bigg (\int_{0}^{t} \int_{0}^{1}&  \|D_{s,y}u_{n,I+1}(t,\cdot)\|_{L^{2}([0,1])} ^{2} dyds \bigg )\\
& \le c\mathbf{E} \bigg (\int_{0}^{t} \int_{0}^{1}\int_{0}^{1} \Big [\int_{s}^{t}\int_{0}^{1}|G_{t-\theta}(x,r)||D_{s,y}u_{n,I}(\theta,r)| dr d \theta \Big ]^{2} dx dyds \bigg )\\
&+c\mathbf{E} \bigg (\int_{0}^{t} \int_{0}^{1} \int_{0}^{1}  \Big |\int_{s}^{t}\int_{0}^{1}G_{t-\theta}(x,r)D_{s,y}u_{n,I}(\theta,r) W(dr,d \theta)  \Big |^{2} dx dyds \bigg )\\
&+c\mathbf{E} \bigg (\int_{0}^{t} \int_{0}^{1}\int_{0}^{1}  |G_{t-s}(x,y)|^{2} dx dy ds \bigg ) \\
&:=B_1(t)+B_2(t)+B_3(t). 
\end{split} 
\end{equation}

\noindent We proceed as follows to bound the term $B_{1}(t)$.
\begin{equation}
\begin{split}
&\mathbf{E} \left(\Big [\int_{s}^{t}\int_{0}^{1}|G_{t-\theta}(x,r)||D_{s,y}u_{n,I}(\theta,r)| dr d \theta \Big ]^{2} \right) \leq \mathbf{E} \bigg( \Big [\int_{s}^{t}\int_{0}^{1} \frac{e^{\frac{-(x-r)^{2}}{4\pi (t-\theta)}}}{\sqrt{t-\theta}}|D_{s,y}u_{n,I}(\theta,r)| dr d \theta \Big ]^{2} \bigg)\leq \\
& \mathbf{E} \bigg(\Big [\int_{s}^{t} \frac{1}{\sqrt{t-\theta}} \Big(\int_{0}^{1} e^{\frac{-(x-r)^{2}}{2\pi (t-\theta)}} dr \Big)^{\frac{1}{2}} \Big(\int_{0}^{1}|D_{s,y}u_{n,I}(\theta,r)|^{2} dr\Big)^{\frac{1}{2}} d \theta \Big ]^{2} \bigg)\leq c \mathbf{E} \bigg(\Big [\int_{s}^{t} \frac{1}{(t-\theta)^{\frac{1}{4}}} \Big(\int_{0}^{1}|D_{s,y}u_{n,I}(\theta,r)|^{2} dr\Big)^{\frac{1}{2}} d \theta \Big ]^{2} \bigg) \\
& \leq c \mathbf{E} \bigg( \int_{s}^{t} \frac{1}{(t-\theta)^{\frac{1}{2}}} d \theta \int_{s}^{t} \Big(\int_{0}^{1}|D_{s,y}u_{n,I}(\theta,r)|^{2} dr\Big) d \theta \bigg) \leq C_{T} \mathbf{E} \bigg(\int_{s}^{t} \int_{0}^{1}|D_{s,y}u_{n,I}(\theta,r)|^{2} dr d \theta \bigg) \\
&= C_{T} \mathbf{E} \bigg(\int_{s}^{t} \|D_{s,y}u_{n,I}(\theta, \cdot)\|_{L^2([0,1])}^{2} d \theta \bigg),\\
\end{split}
\end{equation}

\noindent so we have that
\small
\begin{equation} \nonumber
\begin{split}
& \mathbf{E} \bigg (\int_{0}^{t} \int_{0}^{1}\int_{0}^{1} \Big [\int_{s}^{t}\int_{0}^{1}|G_{t-\theta}(x,r)||D_{s,y}u_{n,I}(\theta,r)| dr d \theta \Big ]^{2} dx dyds \bigg ) \leq \mathbf{E} \bigg (\int_{0}^{t} \int_{0}^{1} C_{T} \int_{s}^{t} \|D_{s,y}u_{n,I}(\theta,\cdot)\|_{L^2([0,1])}^{2} d \theta dyds \bigg ) \\
& \leq C_{T}  \int_{0}^{t}  \mathbf{E} \bigg (\int_{0}^{\theta} \int_{0}^{1} \|D_{s,y}u_{n,I}(\theta,\cdot)\|_{L^2([0,1])}^{2} dy ds \bigg )  d \theta 
\end{split}
\end{equation}
\normalsize

\noindent and we can conclude that

\small
\begin{equation} \nonumber
\begin{split}
& B_1(t) \leq  C_{T}  \int_{0}^{t}  \mathbf{E} \bigg (\int_{0}^{\theta} \int_{0}^{1} \|D_{s,y}u_{n,I}(\theta,\cdot)\|_{L^2([0,1])}^{2} dy ds \bigg )  d \theta.
\end{split}
\end{equation}
\normalsize

%BBBBBBBBBBBBBBBBBBBBBB
We now bound the $B_{2}$ term,
\begin{equation} \nonumber
\begin{split}
& B_{2}(t)=\mathbf{E} \bigg (\int_{0}^{t} \int_{0}^{1} \int_{0}^{1}  \Big |\int_{s}^{t}\int_{0}^{1}G_{t-\theta}(x,r)D_{s,y}u_{n,I}(\theta,r) W(dr,d \theta)  \Big |^{2} dx dyds \bigg ) \\
 &\le c\int_{0}^{t} \int_{0}^{1} \int_{0}^{1} \mathbf{E} \bigg ( \int_{s}^{t}\int_{0}^{1}|G_{t-\theta}(x,r)|^2|D_{s,y}u_{n,I}(\theta,r)|^2 drd \theta \bigg ) dx dyds . \\
 &= c \mathbf{E} \bigg ( \int_{0}^{t} \int_{0}^{1} \int_{0}^{1} \int_{s}^{t}\int_{0}^{1}|G_{t-\theta}(x,r)|^2|D_{s,y}u_{n,I}(\theta,r)|^2 drd \theta  dx dyds \bigg ) \\
 &= c \mathbf{E} \bigg ( \int_{0}^{t} \int_{0}^{1}  \int_{s}^{t}\int_{0}^{1} \int_{0}^{1}|G_{t-\theta}(x,r)|^2 dx |D_{s,y}u_{n,I}(\theta,r)|^2 drd \theta  dyds \bigg ) \\
 & \leq c \mathbf{E} \bigg ( \int_{0}^{t} \int_{0}^{1}  \int_{s}^{t}\int_{0}^{1} \frac{1}{\sqrt{t-\theta}} |D_{s,y}u_{n,I}(\theta,r)|^2 drd \theta  dyds \bigg ) \\
 & \leq c  \int_{0}^{t} \frac{1}{\sqrt{t-\theta}} \mathbf{E} \bigg (   \int_{0}^{\theta} \int_{0}^{1} \int_{0}^{1}  |D_{s,y}u_{n,I}(\theta, r)|^2 dr dy ds \bigg ) d \theta \\
 & \leq c  \int_{0}^{t} \frac{1}{\sqrt{t-\theta}} \mathbf{E} \bigg (   \int_{0}^{\theta} \int_{0}^{1} \| D_{s,y}u_{n,I}(\theta, \cdot)\|^{2}_{L^2([0,1])} dy ds \bigg ) d \theta.
\end{split}
\end{equation}

We conclude that 
\begin{equation}
\begin{split}
& B_{2}(t) \leq c  \int_{0}^{t} \frac{1}{\sqrt{t-\theta}} \mathbf{E} \bigg (   \int_{0}^{\theta} \int_{0}^{1} \| D_{s,y}u_{n,I}(\theta, \cdot)\|^{2}_{L^2([0,1])} dy ds \bigg ) d \theta.
\end{split}
\end{equation}

For the term $B_3$ we have that
\begin{equation}
\begin{split}
& B_{3}(t) = \mathbf{E} \bigg (\int_{0}^{t} \int_{0}^{1}\int_{0}^{1} |G_{t-s}(x,y)|^{2} dx dy ds \bigg ) \leq C_{T}. \\
\end{split}
\end{equation}

Summarizing the terms $B_i$ for $i=1,\cdots,3$ in $(\ref{blue1})$ we get,
\begin{equation}  \label{Ox12}
\begin{split}
\mathbf{E} \bigg (\int_{0}^{t} \int_{0}^{1}&  \|D_{s,y}u_{n,I+1}(t,\cdot)\|^2_{L^{2}([0,1])} dyds \bigg ) \leq \\
&\leq C_{T}\int_{0}^{t} \mathbf{E} \bigg (\int_{0}^{\theta}\int_{0}^{1} \|D_{s,y}u_{n,I}(\theta, \cdot)\|^2_{L^{2}([0,1])}   dyds \bigg )d \theta \\
&+c\int_{0}^{t} \frac{1}{\sqrt{t-\theta}} \mathbf{E} \bigg (\int_{0}^{\theta} \int_{0}^{1}\|D_{s,y}u_{n,I}(\theta, \cdot)\|_{L^{2}([0,1])}^2  dyds \bigg )d\theta+C_{T}.
\end{split}
\end{equation}
Taking supremum on $I \le k$, and taking into account that the constants are independent of $I$, we get
\begin{equation}  \nonumber
\begin{split}
\sup_{I \le k}\mathbf{E} \bigg (\int_{0}^{t} \int_{0}^{1}&  \|D_{s,y}u_{n,I+1}(t,\cdot)\|^2_{L^{2}([0,1])} dyds \bigg )\\
&\leq C_{T}+c \int_{0}^{t} \frac{1}{\sqrt{t-\theta}}\sup_{I \le k}\mathbf{E} \Big [\int_{0}^{\theta} \int_{0}^{1} \|D_{s,y}u_{n,I}(\theta,\cdot)\|^2_{L^{2}([0,1])}  dyds \Big ]d \theta  \\
&+C_{T} \int_{0}^{t}\sup_{I \le k}\mathbf{E} \Big [\int_{0}^{\theta} \int_{0}^{1} \|D_{s,y}u_{n,I}(\theta,\cdot)\|^2_{L^{2}([0,1])}  dyds \Big ]d \theta.
\end{split} 
\end{equation}
We will need $D_{s,y}u_{n, I+1}$ in the right-hand side of the previous inequality to apply Gronwall's inequality eventually. 
If we set 
$$Z(\theta;I):=\sup_{I \le k}\mathbf{E} \Big [\int_{0}^{\theta} \int_{0}^{1} \|D_{s,y}u_{n,I}(\theta,\cdot)\|^2_{L^{2}([0,1])}  dy ds \Big ],$$
then we have that
\begin{equation}  \label{dd401kmbjhu711}
\begin{split}
\sup_{I \le k}\mathbf{E} \bigg (\int_{0}^{t} \int_{0}^{1}&  \|D_{s,y}u_{n,I+1}(t,\cdot)\|^{2}_{L^{2}([0,1])} dyds \bigg )\\
&\leq   C_{T}+c \int_{0}^{t} \frac{1}{\sqrt{t-\theta}}Z(\theta;I)d \theta +C_{T} \int_{0}^{t}Z(\theta;I)d \theta. 
\end{split} 
\end{equation}
In addition,
\begin{equation}  \nonumber
\begin{split}
Z(\theta;I):=\sup_{I \le k}\mathbf{E} \Big [\int_{0}^{\theta} \int_{0}^{1} \Big [\|D_{s,y}u_{n,I}(\theta, \cdot)-D_{s,y}u_{n,I+1}(\theta,\cdot)+D_{s,y}u_{n,I+1}(\theta,\cdot)\|_{L^{2}([0,1])}\Big ]^2  dyds \Big ].
\end{split} 
\end{equation}
Minkowski's inequality gives us
\begin{equation}  \nonumber
\begin{split}
Z(\theta;I) \le \sup_{I \le k}\mathbf{E} \Big [\int_{0}^{\theta} \int_{0}^{1} \Big [\|D_{s,y}u_{n,I}(\theta,\cdot)-D_{s,y}u_{n,I+1}(\theta,\cdot)\|_{L^{2}([0,1])}+\|D_{s,y}u_{n,I+1}(\theta,\cdot)\|_{L^{2}([0,1])}\Big ]^2  dyds \Big ].
\end{split} 
\end{equation}
Let $$\zeta(\theta;I):=\Big [\|D_{s,y}u_{n,I}(\theta,\cdot)-D_{s,y}u_{n,I+1}(\theta,\cdot)\|_{L^{2}([0,1])}+\|D_{s,y}u_{n,I+1}(\theta,\cdot)\|_{L^{2}([0,1])}\Big ]^2.$$
Then
\begin{equation}  \nonumber
\begin{split}
Z(\theta;I) \le \sup_{I \le k}\mathbf{E} \Big [\int_{0}^{\theta} \int_{0}^{1} \zeta(\theta;I)  dyds \Big ].
\end{split} 
\end{equation}
Some basic calculations on $\zeta(\theta;I)$ give us,
\begin{equation}  \nonumber
\begin{split}
\zeta(\theta;I)
&:=\|D_{s,y}u_{n,I}(\theta,\cdot)\|^2_{L^{2}([0,1])}+2\|D_{s,y}u_{n,I+1}(\theta,\cdot)\|^2_{L^{2}([0,1])} \\
&+2\|D_{s,y}u_{n,I}(\theta,\cdot)\|_{L^{2}([0,1])}\|D_{s,y}u_{n,I+1}(\theta,\cdot)\|_{L^{2}([0,1])} \\
&+2\|D_{s,y}u_{n,I}(\theta,\cdot)-D_{s,y}u_{n,I+1}(\theta, \cdot)\|_{L^{2}([0,1])}\|D_{s,y}u_{n,I+1}(\theta,\cdot)\|_{L^{2}([0,1])}.
\end{split} 
\end{equation}
Minkowski's inequality yields
\begin{equation}  \nonumber
\begin{split}
\zeta(\theta;I)&\le \|D_{s,y}u_{n,I}(\theta,\cdot)\|^2_{L^{2}([0,1])}+4\|D_{s,y}u_{n,I+1}(\theta,\cdot)\|^2_{L^{2}([0,1])} \\
&+4\|D_{s,y}u_{n,I}(\theta, \cdot)\|_{L^{2}([0,1])}\|D_{s,y}u_{n,I+1}(\theta,\cdot)\|_{L^{2}([0,1])}.
\end{split} 
\end{equation}
Then,
\begin{equation}  \nonumber
\begin{split}
Z(\theta;I) &\le \sup_{I \le k}\mathbf{E} \Big [\int_{0}^{\theta} \int_{0}^{1} \|D_{s,y}u_{n,I}(\theta,\cdot)\|^2_{L^{2}([0,1])}  dyds \Big ] \\
&+ 4\sup_{I \le k}\mathbf{E} \Big [\int_{0}^{\theta} \int_{0}^{1} \|D_{s,y}u_{n,I+1}(\theta,\cdot)\|^2_{L^{2}([0,1])}  dyds \Big ] \\
&+4 \sup_{I \le k}\mathbf{E} \Big [\int_{0}^{\theta} \int_{0}^{1}\|D_{s,y}u_{n,I}(\theta,\cdot)\|_{L^{2}([0,1])}\|D_{s,y}u_{n,I+1}(\theta,\cdot)\|_{L^{2}([0,1])}  dyds \Big ]. 
\end{split} 
\end{equation}
%OOOOOOOOOOOOOOOOOOOOOOOOO
Since the Malliavin derivative is zero for $s \in (\theta,t)$, we have that,
\begin{equation}  \nonumber
\begin{split}
Z(\theta;I) &\le \sup_{I \le k}\mathbf{E} \Big [\int_{0}^{t} \int_{0}^{1} \|D_{s,y}u_{n,I}(\theta,\cdot)\|^2_{L^{2}([0,1])}  dyds \Big ] \\
&+ 4\sup_{I \le k}\mathbf{E} \Big [\int_{0}^{t} \int_{0}^{1} \|D_{s,y}u_{n,I+1}(\theta,\cdot)\|^2_{L^{2}([0,1])}  dyds \Big ] \\
&+4 \sup_{I \le k}\mathbf{E} \Big \{\int_{0}^{t} \int_{0}^{1}\frac{1}{2}\Big [\|D_{s,y}u_{n,I}(\theta,\cdot)\|^2_{L^{2}([0,1])}+\|D_{s,y}u_{n,I+1}(\theta,\cdot)\|^2_{L^{2}([0,1])} \Big ] dyds \Big \}.
\end{split} 
\end{equation}
Bounding some terms by the supremum with respect to $t \in [0,T]$ we get
\begin{equation}  \nonumber
\begin{split}
Z(\theta;I)
&\le  \sup_{t \in [0,T]}\sup_{I \le k}\mathbf{E} \Big [\int_{0}^{t} \int_{0}^{1} \|D_{s,y}u_{n,I}(\theta,\cdot)\|^2_{L^{2}([0,1])}  dy ds \Big ] \\
&+ 4\sup_{I \le k}\mathbf{E} \Big [\int_{0}^{t} \int_{0}^{1} \|D_{s,y}u_{n,I+1}(\theta,\cdot)\|^2_{L^{2}([0,1])}  dyds \Big ] \\
&+2  \sup_{t \in [0,T]}\sup_{I \le k}\mathbf{E} \Big \{\int_{0}^{t} \int_{0}^{1}\|D_{s,y}u_{n,I}(\theta,\cdot)\|^2_{L^{2}([0,1])}dyds \Big \} \\
&+2 \sup_{I \le k}\mathbf{E} \Big \{\int_{0}^{t} \int_{0}^{1}\|D_{s,y}u_{n,I+1}(\theta,\cdot)\|^2_{L^{2}([0,1])} dyds \Big \}.
\end{split} 
\end{equation}
Using the induction hypothesis in $(\ref{UD27})$ for $I \leq k$, we have
\begin{equation}  \nonumber
\begin{split}
Z(\theta;I) &\le C_{T}
+6\sup_{I \le k}\mathbf{E} \Big \{\int_{0}^{t} \int_{0}^{1}\|D_{s,y}u_{n,I+1}(\theta,\cdot)\|^2_{L^{2}([0,1])}dyds \Big \} 
\end{split} 
\end{equation}
for the positive constant $C_{T}$. Upper bounding the quantity $Z(\theta;I)$ in the relation $(\ref{dd401kmbjhu711})$, yields
\begin{equation}  \nonumber
\begin{split}
\sup_{I \le k}\mathbf{E} \bigg (\int_{0}^{t} \int_{0}^{1}&  \|D_{s,y}u_{n,I+1}(t,\cdot)\|^{2}_{L^{2}([0,1])} dyds \bigg )\\
&\le  C_{T}+c \int_{0}^{t} \frac{1}{\sqrt{t-\theta}}\Big[C_{T}
+6 \sup_{I \le k}\mathbf{E} \Big \{\int_{0}^{\theta} \int_{0}^{1}\|D_{s,y}u_{n,I+1}(\theta,\cdot)\|^2_{L^{2}([0,1])} dyds \Big \}\Big ]d \theta  \\
&+C_{T} \int_{0}^{t}\Big [C_{T}
+6 \sup_{I \le k}\mathbf{E} \Big \{\int_{0}^{\theta} \int_{0}^{1}\|D_{s,y}u_{n,I+1}(\theta,\cdot)\|^2_{L^{2}([0,1])} dyds \Big \}\Big]d \theta.
\end{split} 
\end{equation}
Equivalently,
\begin{equation}  \nonumber \label{an,k}
\begin{split}
\sup_{I \le k}\mathbf{E} \bigg (\int_{0}^{t} \int_{0}^{1}&  \|D_{s,y}u_{n,I+1}(t,\cdot)\|^{2}_{L^{2}([0,1])} dyds \bigg )\\
&\le C_{T}+c \int_{0}^{t} \frac{1}{\sqrt{t-\theta}}
 \sup_{I \le k}\mathbf{E} \Big \{\int_{0}^{\theta} \int_{0}^{1}\|D_{s,y}u_{n,I+1}(\theta,\cdot)\|^2_{L^{2}([0,1])} dyds \Big \}d \theta  \\
&+C_{T} \int_{0}^{t}
\sup_{I \le k}\mathbf{E} \Big \{\int_{0}^{\theta} \int_{0}^{1}\|D_{s,y}u_{n,I+1}(\theta,\cdot)\|^2_{L^{2}([0,1])} dyds \Big \}d \theta.
\end{split} 
\end{equation}
If we set  
\begin{equation} \label{df443ff4f5f6d}
A_{n,k}(t):=\sup_{I \le k}\mathbf{E} \bigg (\int_{0}^{t} \int_{0}^{1}  \|D_{s,y}u_{n,I}(t,\cdot)\|^2_{L^{2}([0,1])} dyds \bigg )
\end{equation}
then
\begin{equation}  \label{df443ffffd}
A_{n,k+1}(t) \le C_{T}+c \int_{0}^{t} \frac{1}{\sqrt{t-\theta}}A_{n,k+1}( \theta) d \theta+C_{T} \int_{0}^{t}A_{n,k+1}( \theta)d \theta.
\end{equation}

 Using this equation, we can get that
 \small
\begin{equation}
\begin{split}
&\int_{0}^{t} \frac{1}{\sqrt{t-\tilde{s}}} A_{n, k+1}(\tilde{s}) d \tilde{s} \leq C_{T}\int_{0}^{t} \frac{1}{\sqrt{t-\tilde{s}}} d \tilde{s}+c \int_{0}^{t} \int_{0}^{\tilde{s}} \frac{1}{\sqrt{t-\tilde{s}}} \frac{1}{\sqrt{\tilde{s}-\theta}} A_{n, k+1}(\theta) d \theta d \tilde{s} + C_{T}\int_{0}^{t} \frac{1}{\sqrt{t-\tilde{s}}} \int_{0}^{\tilde{s}} A_{n, k+1}(\theta) d \theta d\tilde{s} \\
& \leq C_{T}+c \int_{0}^{t}\left(\int_{\theta}^{t}\frac{1}{\sqrt{t-\tilde{s}}} \frac{1}{\sqrt{\tilde{s}-\theta}} d \tilde{s}\right) A_{n, k+1}(\theta) d \theta + C_{T}\int_{0}^{t} \frac{1}{\sqrt{t-\tilde{s}}} d\tilde{s} \int_{0}^{t} A_{n, k+1}(\theta) d \theta \\
& \leq C_{1,T}+C_{2,T} \int_{0}^{t} A_{n, k+1}(\tau) d \tau, \\
\end{split}
\end{equation}

where we used that 
$$
\left(\int_{\theta}^{t}\frac{1}{\sqrt{t-\tilde{s}}} \frac{1}{\sqrt{\tilde{s}-\theta}} d \tilde{s}\right) < \infty, \int_{0}^{t} \frac{1}{\sqrt{t-\tilde{s}}} d\tilde{s}< \infty.
$$

As a result we can go back to $(\ref{df443ffffd})$ and get

\begin{equation} 
A_{n,k+1}(t) \le C_{1,T}+C_{2,T}\int_{0}^{t}A_{n,k+1}( \theta)d \theta.
\end{equation}

Finally, Gronwall's lemma  yields
\begin{equation} \label{f11hhhh122222hsh}
A_{n,k+1}(t):=\sup_{I \le k+1}\mathbf{E} \bigg (\int_{0}^{t} \int_{0}^{1}  \|D_{s,y}u_{n,I}(t,\cdot)\|^2_{L^{2}([0,1])} dyds \bigg )\le C_{T}.
\end{equation}
Taking supremum in $t \in [0,T]$
\begin{equation} \label{l2norm}
\sup_{t \in [0,T]}\sup_{I \le k+1}\mathbf{E} \bigg (\int_{0}^{t} \int_{0}^{1} \|D_{s,y}u_{n,I}(t,\cdot)\|^{2}_{L^{2}([0,1])} dyds \bigg )\le C_{T},
\end{equation}
where the positive constant $C_{T}$ is independent of $t$ and $k$, and only depends on the final time $T$. 
In the case $s>t$ we have that $D_{s,y}u_{n,I}(t,x)=0$. 
\newline
We now prove that  $u_{n,k+1} \in D_{1,2}$.
In accordance with $(\ref{fokfogiiggot9g})$ and $(\ref{norm3})$,
\begin{equation} \label{normtokf004}
\|u_{n,k+1}(t,x)\|_{D_{1,2}}
\le \Big [ \mathbf{E} (|u_{n,k+1}(t,x)|^2) +\mathbf{E} \bigg (\int_{0}^{T} \int_{0}^{1} |D_{s,y}u_{n,k+1}(t,x)|^2dyds \bigg ) \Big ]^{1/2}. 
\end{equation}

\noindent We know that $\sup_{t\in [0,T], x\in [0,1]}\mathbf{E} ( |u_{n,k}(t,x)|^{2} ) < C_{T}$, since we showed that $u_{n,k}$ is Cauchy in $P_{2,\infty}$. Thus
\begin{equation} \nonumber
\begin{split}
\|u_{n,k+1}(t,x)\|_{D_{1,2}}& \le\Big [ C_{T} +\mathbf{E} \bigg (\int_{0}^{T} \int_{0}^{1} |D_{s,y}u_{n,k+1}(t,x)|^2dyds \bigg ) \Big ]^{1/2}.  \\
\end{split}
\end{equation}

It suffices to show that 
\begin{equation} \nonumber
\begin{split}
\mathbf{E} \bigg (\int_{0}^{t} \int_{0}^{1} |D_{s,y}u_{n,k+1}(t,x)|^2dyds \bigg ) < \infty.  \\
\end{split}
\end{equation}

We have that for all $x \in [0,1], t \in [0,T]$,
\begin{equation}  \label{omg}
\begin{split}
\mathbf{E} \bigg(\int_{0}^{t}\int_{0}^{1}|D_{s,y}&u_{n,k+1}(t,x)|^{2} dy ds \bigg) \le c  \int_{0}^{t} \int_{0}^{1} \mathbf{E} \bigg(\Big [\int_{s}^{t}\int_{0}^{1}|G_{t-\theta}(x,r)||D_{s,y}u_{n,k}(\theta,r)| drd \theta  \Big ]^{2} \bigg) dy ds \\
&+c \mathbf{E} \bigg(\int_{0}^{t} \int_{0}^{1} \Big |\int_{s}^{t}\int_{0}^{1}G_{t-\theta}(x,r) D_{s,y}u_{n,k}(\theta,r) W(dr,d \theta)  \Big |^{2} dy ds \bigg)\\
&+c \sup_{x \in [0,1]}\int_{0}^{t} \int_{0}^{1} |G_{t-s}(x,y)|^{2} dy ds \\
&: = \Gamma_{1}(t)+\Gamma_{2}(t)+\Gamma_{3}(t).
\end{split} 
\end{equation}

\noindent We can bound the term $\Gamma_{3}(t)$ as we did for $B_{3}$ by a constant $C_{T}$.

\noindent For the term $\Gamma_{1}(t)$ we proceed as follows
\begin{equation} \nonumber
\begin{split}
&\mathbf{E} \left(\Big [\int_{s}^{t}\int_{0}^{1}|G_{t-\theta}(x,r)||D_{s,y}u_{n,k}(\theta,r)| dr d \theta \Big ]^{2} \right) \leq \mathbf{E} \bigg( \Big [\int_{s}^{t}\int_{0}^{1} \frac{e^{\frac{-(x-r)^{2}}{4\pi (t-\theta)}}}{\sqrt{t-\theta}}|D_{s,y}u_{n,k}(\theta,r)| dr d \theta \Big ]^{2} \bigg)\leq \\
& \mathbf{E} \bigg(\Big [\int_{s}^{t} \frac{1}{\sqrt{t-\theta}} \Big(\int_{0}^{1} e^{\frac{-(x-r)^{2}}{2\pi (t-\theta)}} dr \Big)^{\frac{1}{2}} \Big(\int_{0}^{1}|D_{s,y}u_{n,k}(\theta,r)|^{2} dr\Big)^{\frac{1}{2}} d \theta \Big ]^{2} \bigg)\leq c \mathbf{E} \bigg(\Big [\int_{s}^{t} \frac{1}{(t-\theta)^{\frac{1}{4}}} \Big(\int_{0}^{1}|D_{s,y} u_{n,k}(\theta,r)|^{2} dr\Big)^{\frac{1}{2}} d \theta \Big ]^{2} \bigg) \\
& \leq c \mathbf{E} \bigg( \int_{s}^{t} \frac{1}{(t-\theta)^{\frac{1}{2}}} d \theta \int_{s}^{t} \Big(\int_{0}^{1}|D_{s,y}u_{n,k}(\theta,r)|^{2} dr\Big) d \theta \bigg) \leq C_{T} \mathbf{E} \bigg(\int_{s}^{t} \Big(\int_{0}^{1}|D_{s,y}u_{n,k}(\theta,r)|^{2} dr\Big) d \theta \bigg) \\
&= C_{T} \mathbf{E} \bigg(\int_{s}^{t} \|D_{s,y}u_{n,k}(\theta,\cdot)\|_{L^2([0,1])}^{2} d \theta \bigg),\\
\end{split}
\end{equation}

\noindent so we have that
\small
\begin{equation} \nonumber
\begin{split}
&\mathbf{E} \bigg (\int_{0}^{t} \int_{0}^{1}\int_{0}^{1} \Big [\int_{s}^{t}\int_{0}^{1}|G_{t-\theta}(x,r)||D_{s,y}u_{n,k}(\theta,r)| dr d \theta \Big ]^{2} dx dyds \bigg ) \leq \mathbf{E} \bigg (\int_{0}^{t} \int_{0}^{1} C_{T} \int_{s}^{t} \|D_{s,y}u_{n,k}(\theta,\cdot)\|_{L^2([0,1])}^{2} d \theta dyds \bigg ) \\
& \leq C_{T}  \int_{0}^{t}  \mathbf{E} \bigg (\int_{0}^{\theta} \int_{0}^{1} \|D_{s,y}u_{n,k}(\theta,\cdot)\|_{L^2([0,1])}^{2} dy ds \bigg )  d \theta \leq C_{T}  \sup_{ \theta \in [0,T]}  \mathbf{E} \bigg (\int_{0}^{\theta} \int_{0}^{1} \|D_{s,y}u_{n,k}(\theta,\cdot)\|_{L^2([0,1])}^{2} dy ds \bigg ) \leq C_{T},
\end{split}
\end{equation}
\normalsize

\noindent where we used ($\ref{l2norm}$).

\noindent It remains to bound $\Gamma_{2}(t)$.
We have supposed in the induction hypothesis that 
$$ \sup_{t \in [0,T], x \in [0,1]} \mathbf{E} \bigg ( \int_{0}^{t}\int_{0}^{1} | D_{s,y}u_{n,I}(t, x)|^2 dyds \bigg ):=C_{T}<\infty$$ for all $I \leq k$ and our aim now is to show that it also holds for $k+1$. We have

\begin{equation}  \nonumber \label{gamma_2bound}
\begin{split}
& \mathbf{E} \bigg(\int_{0}^{t} \int_{0}^{1} \Big |\int_{s}^{t}\int_{0}^{1}G_{t-\theta}(x,r)\hat{m}_2(n,k)(\theta,r)D_{s,y}u_{n,k}(\theta,r) W(dr,d \theta)  \Big |^{2} dy ds \bigg) \\
&\leq c \mathbf{E} \bigg( \int_{0}^{t} \int_{0}^{1}  \int_{s}^{t} \int_{0}^{1}G^{2}_{t-\theta}(x,r)|D_{s,y}u_{n,k}(\theta,r)|^{2} dr d\theta dy ds \bigg)\\
& \leq c\int_{0}^{t} \frac{1}{\sqrt{t-\theta}} \sup_{r \in [0,1]} \mathbf{E} \bigg(\int_{0}^{\theta} \int_{0}^{1} |D_{s,y}u_{n,k}(\theta,r)|^{2} dy ds \bigg) d\theta \\
\end{split} 
\end{equation}

\noindent and this implies in particular that 
$\Gamma_{2}(t) \leq C_{k,T}$ using the third condition of the induction for $k$. \\
Combining these estimates we can conclude that $ u_{n,k+1}(t,x) \in D_{1,2}$. \\

We can also conclude that $$\sup_{t \in [0,T], x \in [0,1]} \mathbf{E} \bigg ( \int_{0}^{t}\int_{0}^{1} | D_{s,y}u_{n,k+1}(t, x)|^2 dyds \bigg )<\infty.$$

\noindent since we bounded all the terms by constants that only depend on the final time $T$ and $k$. 

\noindent Moreover, the terms $\Gamma_{1},\Gamma_{3}$ have been bounded by constants that only depend on the final time $T$ and not on $k$.
Combining with what we showed for $\Gamma_{2}(t)$ we have the equation:

\begin{equation}  \label{Ox12_2}
\begin{split}
\sup_{x \in [0,1]}\mathbf{E} \bigg (\int_{0}^{t} \int_{0}^{1}&  |D_{s,y}u_{n,k+1}(t,x)|^2 dyds \bigg ) \leq C_{1,T}\\
&+C_{2,T}\int_{0}^{t} \frac{1}{\sqrt{t-\theta}}\sup_{r \in [0,1]} \mathbf{E} \bigg (\int_{0}^{\theta} \int_{0}^{1}|D_{s,y}u_{n,k}(\theta,r)|^{2}  dyds \bigg )d\theta.
\end{split}
\end{equation}

\noindent So following exactly what we did in Step 2, applied to the equation $(\ref{Ox12_2})$ this time, we can get a bound that does not depend on $k$, so that

\begin{equation} \label{f11hhhh22222hsh}
\sup_{I \le k+1}\sup_{t \in [0,T], x \in [0,1]}\mathbf{E} \bigg (\int_{0}^{t} \int_{0}^{1} |D_{s,y}u_{n,I}(t,x)|^{2} dyds \bigg )\le C_{T}.
\end{equation}

\noindent The induction is complete, and thus $(\ref{UD27})$ holds, i.e.,
\begin{equation} \nonumber
\forall k \text{ }u_{n,k}(t,x) \in D_{1,2} \mbox{ for every } (t,x) \in [0,T] \times [0,1]
\end{equation}
and
\begin{equation} \nonumber
\forall k \text{ } \sup_{I \leq k}\sup_{t \in [0,T]} \mathbf{E} \bigg ( \int_{0}^{t}\int_{0}^{1} \| D_{s,y}u_{n,I}(t,\cdot)\|^2_{L^{2}([0,1])} dyds \bigg )\leq C_{T}
\end{equation}
and
\begin{equation} \nonumber
\forall k \text{ } \sup_{I \leq k} \sup_{t \in [0,T], x \in [0,1]} \mathbf{E} \bigg ( \int_{0}^{t}\int_{0}^{1} | D_{s,y}u_{n,I}(t, x)|^2 dyds \bigg )\leq C_{T}.
\end{equation}
\newline

\begin{remark} \label{iter}
We have that 
\begin{equation}  \label{Ox12_copy}
\begin{split}
\sup_{x \in [0,1]}\mathbf{E} \bigg (\int_{0}^{t} \int_{0}^{1}&  |D_{s,y}u_{n,k+1}(t,x)|^2 dyds \bigg ) \leq C_{1,T}\\
&+C_{2,T}\int_{0}^{t} \frac{1}{\sqrt{t-\theta}}\sup_{r \in [0,1]} \mathbf{E} \bigg (\int_{0}^{\theta} \int_{0}^{1}|D_{s,y}u_{n,k}(\theta,r)|^{2}  dyds \bigg )d\theta.
\end{split}
\end{equation}
So setting $B_{n,k}(t)=\sup_{x \in [0,1]}\mathbf{E} \bigg (\int_{0}^{t} \int_{0}^{1} |D_{s,y}u_{n,k+1}(t,x)|^2 dyds \bigg )$ and iterating this inequality we have

\begin{equation}  \label{iteration}
\begin{split}
& B_{n,k+1}(t) \leq C_{1,T}+C_{2,T}\int_{0}^{t} \frac{1}{\sqrt{t-\theta}} B_{n,k}(\theta) d\theta \\
& \leq C_{T}+C_{T}\int_{0}^{t}\int_{0}^{\theta}\frac{1}{\sqrt{t-
\theta}\sqrt{\theta-s}} B_{n,k-1}(s) ds d\theta \\
& \leq C_{T}+C_{T}\int_{0}^{t}B_{n,k-1}(s) \int_{s}^{t}\frac{1}{\sqrt{t-
\theta}\sqrt{\theta-s}} d\theta ds \\
&\leq C_{T}+C_{T}\int_{0}^{t}B_{n,k-1}(s)ds,
\end{split}
\end{equation}
\end{remark}
where we used that $\int_{s}^{t}\frac{1}{\sqrt{t-
\theta}\sqrt{\theta-s}} d\theta < \infty$. Iterating this inequality gives us a uniform bound on $k$ that only depends on the final time $T$, and we get 
$$\sup_{k}\sup_{x \in [0,1], t\in [0,T]}\mathbf{E} \bigg (\int_{0}^{t} \int_{0}^{1} |D_{s,y}u_{n,k+1}(t,x)|^2 dyds \bigg ) \leq C_{T}.$$

\smallskip
{\it{Step 3: The process $u_n$ belongs to the space $D_{1,2}$.}} \\

\noindent We have shown that $u_{n,k}$ converges to $u_n$ in the $P_{2,\infty}$ norm and $\sup_{t\in [0,T], x\in [0,1]}\mathbf{E}(u_{n,k}(t,x)^{2}) \leq C_{T}$, so we get that $\sup_{t\in [0,T], x\in [0,1]}\mathbf{E}(u_{n}(t,x)^{2}) \leq C_{T}.$
\\
Moreover, in Remark $\ref{iter}$ of Step 2 we showed that for all $t \in [0,T], x\in [0,1]$
\begin{equation} \label{requirem1}
\sup_{  k}\mathbf{E} \Big (\|D_{\cdot,\cdot}u_{n,k}  (t,x)\|^2_{L^{2}(  [0,T] \times [0,1])} \Big )\leq C_{T}.
\end{equation}
Hence, the conditions of Lemma 1.2.3 of \cite{N} are satisfied and we get that $u_{n} \in D_{1,2}$ and $D_{s,y}u_{n,k}(t,x) \rightarrow D_{s,y}u_{n}(t,x)$ in the weak topology. In addition, the process $u_n$ is an $\{\mathcal{F}_s, \mbox{ } s\le t\}$-adapted process, generated by the Wiener process as in \cite{W6}. We can now differentiate $(\ref{un})$ since $u_{n} \in D_{1,2}$ and get
\begin{equation} \label{PPddfo3}
\begin{split}
 D_{s,y}&u_{n}(t,x) =G_{t-s}(x,y)H_n(|u_{n}(s,y)|)\sigma(s,y,u_{n}(s,y)) \\
&+\int_{s}^{t}\int_{0}^{1}G_{t-\theta}(x,r)m(n)(\theta,r)D_{s,y}u_{n}(\theta,r) dr d \theta  \\
&+ \int_{s}^{t}\int_{0}^{1} G_{t-\theta}(x,r) \hat{m}(n)(\theta,r)D_{s,y}u_{n}(\theta,r) W(dr,d \theta),
\end{split} 
\end{equation}
where $m$ and $\hat{m}$ are bounded. Note that this holds for $s \le t$, and if $s>t$, then $D_{s,y}u_{n}(t,x)=0$. \\

%{In fact the solution to the integral %equation $(\ref{PPddfo3})$ is unique.
%we suppose that $ \hat{D}_{s,y}u_{n}(t,x)$ %is another solution of $(\ref{PPddfo3})$, %then
%\begin{equation} \nonumber
%\begin{split}
 %|D_{s,y}&u_{n}(t,x)-\hat{D}_{s,y}u_{n}%(t,x)|\leq  \\
% &\leq \int_{s}^{t}\int_{0}^{1}|G_{t-\theta}(x,r)|m_2(n)(r,\theta)|D_{s,y}u_{n}(\theta,r)-\hat{D}_{s,y}u_{n}(\theta,r)|drd \theta\\
 %& +\Big |\int_{s}^{t}\int_{0}^{1} G_{t-\theta}(x,r) \hat{m}_2(n)(r,\theta)(D_{s,y}u_{n}(\theta,r)- \hat{D}_{s,y}u_{n}(\theta,r))W(dr,d \theta) \Big |.
%\end{split} 
%\end{equation}
%Then, following the same argument as in equation $(\ref{OoOf})$ we get that ,
%\begin{equation} \nonumber
%\begin{split}
%B_n(t)&\le  c_{t,n}\int_{0}^{t}B_n(s)d s+c_n\int_{0}^{t}B_n(\theta)d \theta \le c_{t,n} \int_{0}^{t}B_n(\theta)d \theta
%\end{split} 
%\end{equation}
%for $\theta>s$ and for
% $$B_n(t):=\mathbf{E}\Big \{\int_{0}^{t}\int_{0}^{1}\|D_{s,y}u_{n}(t,\cdot) -\hat{D}_{y,s}u_{n}(t,\cdot)\|^2 dyds \Big \}.$$  
% According to the Gronwall's inequality,
%$$B_n(t):=\mathbf{E}\Big \{\int_{0}^{t}\int_{0}^{1}\|D_{s,y}u_{n}(t,\cdot) -\hat{D}_{y,s}u_{n}(t,\cdot)\|^2 dyds \Big \} = 0$$
%and we get that, $D_{s,y}u_{n}(t,x) =\hat{D}_{s,y}u_{n}(t,x)$ for $x \in [0,1]$ and $t \in [0,T]$. Hence, the solution to the equation $(\ref{PPddfo3})$ is unique. \\
%

{\it{Step 4: The process $u_n$ belongs to the space $L_{1,2}$.}}\\

According to the definition of the space $L_{1,2}$, i.e., relation $(\ref{SJ29d})$, it suffices to prove that  
$$\mathbf{E}\Big(\int_0^T\int_{0}^{1}\int_0^T\int_{0}^{1}|D_{s,y}u_n(t,x)|^2dydsdxdt\Big{)}<\infty.$$
For  a given $(t,x)$ we have that
\begin{equation} \label{ddfogn974h}
\begin{split}
&\mathbf{E}\Big(\int_0^T\int_{0}^{1}\int_0^T\int_{0}^{1}|D_{s,y}u_n(t,x)|^2dydsdxdt\Big{)}
\le \int_0^T\int_{0}^{1}\mathbf{E}\Big(\int_0^T\int_{0}^{1}|D_{s,y}u_n(t,x)|^2dyds\Big{)}dxdt\\
&=\int_0^T\int_{0}^{1}\mathbf{E}\Big(\int_0^T\int_{0}^{1}|D_{s,y}u_n(t,x)-D_{s,y}u_{n,k}(t,x)+D_{s,y}u_{n,k}(t,x)|^2dyds\Big{)}dxdt \\
&\leq
c\int_0^T\int_{0}^{1}\mathbf{E}\Big(\int_0^T\int_{0}^{1}|D_{s,y}u_n(t,x)-D_{s,y}u_{n,k}(t,x)|^2
dyds\Big{)}dxdt\\
&+c\int_0^T\int_{0}^{1}\mathbf{E}\Big(\int_0^T\int_{0}^{1} |D_{s,y}u_{n,k}(t,x)|^2dyds\Big{)}dxdt.
\end{split}
\end{equation}
The last term in the above inequality is bounded since
\begin{equation} \label{nonst4}
\int_0^T\int_{0}^{1}\mathbf{E}\Big(\int_0^T\int_{0}^{1} |D_{s,y}u_{n,k}(t,x)|^2dyds\Big{)}dxdt \leq \int_0^T\int_{0}^{1}\sup_{t \in [0,T], x \in [0,1]} \mathbf{E}\Big(\int_0^t\int_{0}^{1} |D_{s,y}u_{n,k}(t,x)|^2dyds\Big{)}dxdt \leq C_{T}.
\end{equation}
Now, we only need to prove the following
\begin{equation}\nonumber
\begin{split}
\sup_{x \in [0,1],t \in [0,T]}\mathbf{E}\Big(\int_0^T\int_{0}^{1}|D_{s,y}&u_n(t,x)-D_{s,y}u_{n,k}(t,x)|^2
dyds\Big{)} <\infty.
\end{split}
\end{equation}

Subtracting $(\ref{OoOf})$ from $(\ref{PPddfo3})$ gives us
\begin{equation} \nonumber
\begin{split}
 |D_{s,y}&u_{n}(t,x)- D_{s,y}u_{n,k+1}(t,x)| \\
 &\le|G_{t-s}(x,y)| |H_n(|u_{n}(s,y)|)\sigma(s,y,u_{n}(s,y))-H_n(|u_{n,k}(s,y)|)\sigma(s,y,u_{n,k}(s,y)) | \\
&+\int_{s}^{t}\int_{0}^{1}|G_{t-\theta}(x,r)|  |m(n)(\theta,r)D_{s,y}u_{n}(\theta,r)-m_2(n,k)(\theta,r)D_{s,y}u_{n,k}(\theta,r) |drd \theta  \\
&+\Big | \int_{s}^{t}\int_{0}^{1} G_{t-\theta}(x,r) \Big (\hat{m}(n)(\theta,r)D_{s,y}u_{n}(\theta,r)-\hat{m}_2(n,k)(\theta,r)D_{s,y}u_{n,k}(\theta,r) \Big)W(dr,d \theta)\Big |.
\end{split} 
\end{equation}
The previous inequality holds on for $s \le t$, otherwise, i.e., if $s>t$ then $$ D_{s,y}u_{n}(t,x)- D_{s,y}u_{n,k+1}(t,x)=0.$$
But since
\begin{equation}  \label{Ox1}
\begin{split}
|m(n)(\theta,r)&D_{s,y}u_{n}(\theta,r) -m_2(n,k)(\theta,r)D_{s,y}u_{n,k}(\theta,r)| \\
&\le |m(n)(\theta,r)| |D_{s,y}u_{n}(\theta,r)-D_{s,y}u_{n,k}(\theta,r)| \\
&+|D_{s,y}u_{n,k}(\theta,r)||m(n)(\theta,r)-m_2(n,k)(\theta,r)|,
\end{split} 
\end{equation}
we get
\begin{equation} \nonumber
\begin{split}
 |D_{s,y}&u_{n}(t,x)- D_{s,y}u_{n,k+1}(t,x)| \\
 &\le|G_{t-s}(x,y)| |H_n(|u_{n}(s,y)|)\sigma(s,y,u_{n}(s,y))-H_n(|u_{n,k}(s,y)|)\sigma(s,y,u_{n,k}(s,y)) | \\
&+\int_{s}^{t}\int_{0}^{1}|G_{t-\theta}(x,r)|   |m(n)(\theta,r)| |D_{s,y}u_{n}(\theta,r)-D_{s,y}u_{n,k}(\theta,r)|drd \theta  \\
&+\int_{s}^{t}\int_{0}^{1}|G_{t-\theta}(x,r)|  |D_{s,y}u_{n,k}(\theta,r)||m(n)(\theta,r)-m_2(n,k)(\theta,r)|drd \theta  \\
&+\Big | \int_{s}^{t}\int_{0}^{1} G_{t-\theta}(x,r) \Big (\hat{m}(n)(\theta,r)D_{s,y}u_{n}(\theta,r)-\hat{m}_2(n,k)(\theta,r)D_{s,y}u_{n,k}(\theta,r) \Big )W(dr,d \theta)\Big |.
\end{split} 
\end{equation}
Equivalently,
\begin{equation} \nonumber
\begin{split}
 |D_{s,y}&u_{n}(t,x)- D_{s,y}u_{n,k+1}(t,x)| \\
 &\le|G_{t-s}(x,y)| |H_n(|u_{n}(s,y)|)\sigma(s,y,u_{n}(s,y))-H_n(|u_{n,k}(s,y)|)\sigma(s,y,u_{n,k}(s,y)) | \\
&+\int_{s}^{t}\int_{0}^{1}|G_{t-\theta}(x,r)|   |m(n)(\theta,r)| |D_{s,y}u_{n}(\theta,r)-D_{s,y}u_{n,k}(\theta,r)|drd \theta  \\
&+\int_{s}^{t}\int_{0}^{1}|G_{t-\theta}(x,r)|  |D_{s,y}u_{n,k}(\theta,r)||m(n)(\theta,r)|drd \theta  \\
&+\int_{s}^{t}\int_{0}^{1}|G_{t-\theta}(x,r)|  |D_{s,y}u_{n,k}(\theta,r)||m_2(n,k)(\theta,r)|drd \theta  \\
&+\Big | \int_{s}^{t}\int_{0}^{1} G_{t-\theta}(x,r) \Big (\hat{m}(n)(\theta,r)D_{s,y}u_{n}(\theta,r)-\hat{m}_2(n,k)(\theta,r)D_{s,y}u_{n,k}(\theta,r) \Big )W(dr,d \theta)\Big |.
\end{split} 
\end{equation}
Using the fact that $|m(n)(\theta,r)| \le C_{f}, |m_2(n,k)(\theta,r)| \le C_{\sigma}$ for constants independent of $k$, we have
\begin{equation} \nonumber
\begin{split}
 |D_{s,y}&u_{n}(t,x)- D_{s,y}u_{n,k+1}(t,x)| \\
 &\le|G_{t-s}(x,y)| |H_n(|u_{n}(s,y)|)\sigma(s,y,u_{n}(s,y))-H_n(|u_{n,k}(s,y)|)\sigma(s,y,u_{n,k}(s,y)) | \\
&+c(n)\int_{s}^{t}\int_{0}^{1}|G_{t-\theta}(x,r)|    |D_{s,y}u_{n}(\theta,r)-D_{s,y}u_{n,k}(\theta,r)|drd \theta  \\
&+c(n)\int_{s}^{t}\int_{0}^{1}|G_{t-\theta}(x,r)|  |D_{s,y}u_{n,k}(\theta,r)|drd \theta  \\
&+\Big | \int_{s}^{t}\int_{0}^{1} G_{t-\theta}(x,r) \Big (\hat{m}(n)(\theta,r)D_{s,y}u_{n}(\theta,r)-\hat{m}_2(n,k)(\theta,r)D_{s,y}u_{n,k}(\theta,r) \Big )W(dr,d \theta)\Big |.
\end{split} 
\end{equation}
As a result we get 
\begin{equation} \nonumber
\begin{split}
 |D_{s,y}&u_{n}(t,x)- D_{s,y}u_{n,k+1}(t,x)|^{2} \\
 &\le c(|G_{t-s}(x,y)| |H_n(|u_{n}(s,y)|)\sigma(s,y,u_{n}(s,y))-H_n(|u_{n,k}(s,y)|)\sigma(s,y,u_{n,k}(s,y)) |)^2 \\
&+c(n) \Big (\int_{s}^{t}\int_{0}^{1}|G_{t-\theta}(x,r)|    |D_{s,y}u_{n}(\theta,r)-D_{s,y}u_{n,k}(\theta,r)|drd \theta \Big )^2  \\
&+c(n) \Big (\int_{s}^{t}\int_{0}^{1}|G_{t-\theta}(x,r)|  |D_{s,y}u_{n,k}(\theta,r)|drd \theta \Big )^2  \\
&+c \Big | \int_{s}^{t}\int_{0}^{1} G_{t-\theta}(x,r) \Big (\hat{m}(n)(\theta,r)D_{s,y}u_{n}(\theta,r)-\hat{m}_2(n,k)(\theta,r)D_{s,y}u_{n,k}(\theta,r) \Big )W(dr,d \theta)\Big |^2.
\end{split} 
\end{equation}
Integrating for $y \in [0,1], s\in [0,t]$ gives us
\begin{equation} \nonumber
K(t,x ;k):=\mathbf{E}\Big ( \int_{0}^{t}\int_{0}^{1}  |D_{s,y}u_{n}(t,x)- D_{s,y}u_{n,k+1}(t,x)|^2 dyds \Big) \le C(n) \sum_{j=1}^{4} K_j(t,x;k),
\end{equation}
where
$$K_1(t,x;k):=\mathbf{E}\Big (\int_{0}^{t}\int_{0}^{1}\Big(|G_{t-s}(x,y)|^2 |H_n(|u_{n}(s,y)|)\sigma(s,y,u_{n}(s,y))-H_n(|u_{n,k}(s,y)|)\sigma(s,y,u_{n,k}(s,y)) |^2\Big) dyds \Big ),$$
$$K_2(t,x;k):=\mathbf{E}\Big (\int_{0}^{t}\int_{0}^{1} \Big (\int_{s}^{t}\int_{0}^{1}|G_{t-\theta}(x,r)|    |D_{s,y}u_{n}(\theta,r)-D_{s,y}u_{n,k}(\theta,r)|drd \theta \Big )^2dyds \Big ) ,$$
$$K_3(t,x;k):=\mathbf{E}\Big (\int_{0}^{t}\int_{0}^{1}\Big (\int_{s}^{t}\int_{0}^{1}|G_{t-\theta}(x,r)|  |D_{s,y}u_{n,k}(\theta,r)|drd \theta \Big )^2dyds \Big ),$$
\begin{equation}
\begin{split}
K_4(t,x;k)&:=\mathbf{E}\Big (\int_{0}^{t}\int_{0}^{1} \Big | \int_{s}^{t}\int_{0}^{1} G_{t-\theta}(x,r) \Big (\hat{m}(n)(\theta,r)D_{s,y}u_{n}(\theta,r) \\
&-\hat{m}_2(n,k)(\theta,r)D_{s,y}u_{n,k}(\theta,r) \Big )W(dr,d \theta)\Big |^2dyds \Big ).
\end{split}
\end{equation}
The term $K_1(t,x;k)$ can be bounded as follows:
\begin{equation}
\begin{split}
&K_1(t,x;k):=\mathbf{E}\Big (\int_{0}^{t}\int_{0}^{1}\Big(|G_{t-s}(x,y)|^2 |H_n(|u_{n}(s,y)|)\sigma(s,y,u_{n}(s,y))-H_n(|u_{n,k}(s,y)|)\sigma(s,y,u_{n,k}(s,y)) |^2\Big) dy ds \Big ) \\ 
& \leq c\mathbf{E}\Big (\int_{0}^{t}\int_{0}^{1}\Big(|G_{t-s}(x,y)|^2 (|H_n(|u_{n}(s,y)|)\sigma(s,y,u_{n}(s,y))|^{2}+|H_n(|u_{n,k}(s,y)|)\sigma(s,y,u_{n,k}(s,y)) |^2)\Big) dy ds \Big ) \\
& \leq c\mathbf{E}\Big (\int_{0}^{t}\int_{0}^{1} |G_{t-s}(x,y)|^2 dy ds \Big )
\leq c\int_{0}^{t}\int_{0}^{1} |G_{t-s}(x,y)|^2  dy ds \leq C_{T},
\end{split}
\end{equation}
As a result we get that 
$$
\sup_{t\in [0,T], x \in [0,1]} K_1(t,x;k) \leq C_{T}.$$

\noindent We can bound the term $K_2(t,x;k)$ as follows
\begin{equation} \nonumber
\begin{split}
&K_2(t,x;k):=\mathbf{E}\Big (\int_{0}^{t}\int_{0}^{1} \Big (\int_{s}^{t}\int_{0}^{1}|G_{t-\theta}(x,r)|    |D_{s,y}u_{n}(\theta,r)-D_{s,y}u_{n,k}(\theta,r)|drd \theta \Big )^2dyds \Big )  \\
&\leq   \mathbf{E}\Big (\int_{0}^{t}\int_{0}^{1} 
\Big(\int_{s}^{t}\int_{0}^{1}|G_{t-\theta}(x,r)|^{2} dr d\theta  \int_{s}^{t}\int_{0}^{1} |D_{s,y}u_{n}(\theta,r)-D_{s,y}u_{n,k}(\theta,r)|^{2} drd \theta \Big) dyds \Big )               \\
& \leq  C_{T}  \mathbf{E}\Big (\int_{0}^{t}\int_{0}^{1}  \int_{s}^{t}\int_{0}^{1} |D_{s,y}u_{n}(\theta,r)-D_{s,y}u_{n,k}(\theta,r)|^{2} drd \theta dyds \Big )  \\
&= C_{T} \int_{0}^{t}\int_{0}^{1}  \mathbf{E}\Big (\int_{0}^{\theta}\int_{0}^{1} |D_{s,y}u_{n}(\theta,r)-D_{s,y}u_{n,k}(\theta,r)|^{2} dyds \Big ) dr d\theta \\
&\leq C_{T} \int_{0}^{t}\sup_{r\in [0,1]}  \mathbf{E}\Big (\int_{0}^{\theta}\int_{0}^{1} |D_{s,y}u_{n}(\theta,r)-D_{s,y}u_{n,k}(\theta,r)|^{2} dyds \Big ) d\theta. \\
\end{split}
\end{equation}
\normalsize

\noindent The term $K_3$ can also be bounded in a similar way as $B_1(t)$ and we get directly that
\begin{equation} \nonumber
\begin{split}
&K_3(t,x;k):= \mathbf{E}\Big (\int_{0}^{T}\int_{0}^{1}\Big (\int_{s}^{t}\int_{0}^{1}|G_{t-\theta}(x,r)|  |D_{s,y}u_{n,k}(\theta,r)|drd \theta \Big )^2dyds \Big )\\
& \leq c \int_{0}^{t}  \mathbf{E} \bigg (\int_{0}^{\theta} \int_{0}^{1} \|D_{s,y}u_{n,k}(\theta,\cdot)\|_{L^2([0,1])}^{2} dy ds \bigg )  d \theta.
\end{split}
\end{equation}
So according to $(\ref{UD28})$, we have that
$$\sup_{t \in [0,T], x\in [0,1]} K_3(t,x;k)\leq C_{T} .$$

\noindent Now we deal with the stochastic term $K_4(t,x;k).$ 
\begin{equation} \nonumber
\begin{split}
K_4(t,x;k)\le c\int_{0}^{t}\int_{0}^{1}\mathbf{E} \bigg ( & \Big | \int_{s}^{t}\int_{0}^{1}G_{t-\theta}(x,r)[\hat{m}(n)(\theta,r)D_{s,y}u_{n}(\theta,r) \\
&-\hat{m}_2(n,k)(\theta,r)D_{s,y}u_{n,k}(\theta,r)] W(dr,d \theta) \Big |^2\bigg )dyds.
\end{split}
\end{equation}
The Burkholder-Davis-Gundy inequality yields,
\begin{equation} \nonumber
\begin{split}
K_4(t,x;k)&\le c\int_{0}^{t}\int_{0}^{1}\mathbf{E} \bigg (  \int_{s}^{t}\int_{0}^{1}|G_{t-\theta}(x,r)|^2|\hat{m}(n)(\theta,r)D_{s,y}u_{n}(\theta,r)\\
&-\hat{m}_2(n,k)(\theta,r)D_{s,y}u_{n,k}(\theta,r)|^2drd \theta  \bigg )dyds.
\end{split} 
\end{equation}
But as in $(\ref{Ox1})$ we can get 
\begin{equation} \nonumber
\begin{split}
&K_4(t,x;k) \leq c \int_{0}^{t} \int_{0}^{1} \mathbf{E} \bigg( 
      \int_{s}^{t} \int_{0}^{1} 
    \bigg\{ E_1(t,x;k)^2 + E_2(t,x;k)^2
    \bigg\} \, dr \, d\theta 
    \bigg) \, dy \, ds
\end{split}
\end{equation}
for
$$E_1(t,x;k):=|G_{t-\theta}(x,r)||\hat{m}(n)(\theta,r)| |D_{s,y}u_{n}(\theta,r)-D_{s,y}u_{n,k}(\theta,r)|$$
$$E_2(t,x;k):=|G_{t-\theta}(x,r)||D_{s,y}u_{n,k}(\theta,r)||\hat{m}(n)(\theta,r)-\hat{m}_2(n,k)(\theta,r)|.$$

Thus,
\begin{equation} \label{qq55go0f}
\begin{split}
K_4(t,x;k)&\le
 c\int_{0}^{t}\int_{0}^{1}\mathbf{E} \Big \{ \int_{s}^{t}\int_{0}^{1} E_1(t,x;k)^2 drd \theta  \Big \}dyds \\
&+ c\int_{0}^{t}\int_{0}^{1}\mathbf{E} \Big \{ \int_{s}^{t}\int_{0}^{1}E_2(t,x;k)^2 drd \theta \Big \}dyds. \\
\end{split} 
\end{equation}
Let's start with the first term of $K_4(t,x;k)$, replacing the quantity of $E_1(t,x;k)$. Then,
\begin{equation} \nonumber
\begin{split}
c\int_{0}^{t}\int_{0}^{1}&\mathbf{E} \Big \{  \int_{s}^{t}\int_{0}^{1} E_1(t,x;k)^2 drd \theta  \Big \}dyds \\
&=c\int_{0}^{t}\int_{0}^{1}\mathbf{E} \Big \{  \int_{s}^{t} \int_{0}^{1} |G_{t-\theta}(x,r)|^2|\hat{m}(n)(\theta,r)|^2 |D_{s,y}u_{n}(\theta,r)-D_{s,y}u_{n,k}(\theta,r)|^2 drd \theta  \Big \}dyds.
\end{split} 
\end{equation}
Since $\hat{m}(n)$ is bounded by a constant $C_{\sigma}$ we get, 
\begin{equation} \nonumber
\begin{split}
c\int_{0}^{t}\int_{0}^{1}&\mathbf{E} \Big \{  \int_{s}^{t}\int_{0}^{1} E_1(t,x;k)^2 drd \theta \Big \}dyds \\
&\le C_{\sigma}\int_{0}^{t}\int_{0}^{1}\mathbf{E} \Big \{ \int_{s}^{t}\int_{0}^{1} |G_{t-\theta}(x,r)|^2 |D_{s,y}u_{n}(\theta,r)-D_{s,y}u_{n,k}(\theta,r)|^2 drd \theta  \Big \}dyds.
\end{split} 
\end{equation}
The right-hand side of the previous inequality has the same form as the term $\Gamma_2(t)$, so we get in the same way

\begin{equation} \label{jov9flg1}
\begin{split}
c\int_{0}^{t}\int_{0}^{1}&\mathbf{E} \Big \{  \int_{s}^{t}\int_{0}^{1} E_1(t,x;k)^2 drd \theta  \Big \}dyds \\
&\le c\int_{0}^{t} \frac{1}{\sqrt{t-\theta}} \sup_{r \in [0,1]} \mathbf{E} \bigg(\int_{0}^{\theta} \int_{0}^{1}  |D_{s,y}u_{n}(\theta,r)-D_{s,y}u_{n,k}(\theta,r)|^{2} dy ds \bigg)  d\theta .
\end{split}
\end{equation} 
For the second term of $K_4(t,x;k)$, we have
\begin{equation} \nonumber
\begin{split}
 c\int_{0}^{t}\int_{0}^{1}&\mathbf{E} \Big \{\int_{s}^{t}\int_{0}^{1}E_2(t,x;k)^2 dr d\theta  \Big \}dyds \\
&\le c\int_{0}^{t}\int_{0}^{1}\mathbf{E} \Big \{\int_{s}^{t}\int_{0}^{1}|G_{t-\theta}(x,r)|^2|D_{s,y}u_{n,k}(\theta,r)|^2 \Big [|\hat{m}(n)(\theta,r)|\\
&+|\hat{m}_2(n,k)(\theta,r)|\Big ]^2 dr 
 d\theta  \Big \}dyds,
\end{split} 
\end{equation}
where we used the triangle inequality.
But since $\hat{m}, \hat{m}_{2}$ are both bounded we have that 
\begin{equation} \nonumber
\begin{split}
 c\int_{0}^{t}\int_{0}^{1}\mathbf{E} &\Big \{\int_{s}^{t}\int_{0}^{1}E_2(t,x;k)^2 drd \theta  \Big \}dyds \\
&\le c\int_{0}^{t}\int_{0}^{1}\mathbf{E} \Big \{\int_{s}^{t}\int_{0}^{1}|G_{t-\theta}(x,r)|^2|D_{s,y}u_{n,k}(\theta,r)|^2 drd \theta \Big \}dyds.
\end{split} 
\end{equation}
We have bounded the right-hand side of the above inequality in $\ref{gamma_2bound}$, and eventually found a bound that does not depend on $k$, so we can get  
\begin{equation} \label{jov9flg2}
 c\int_{0}^{t}\int_{0}^{1}\mathbf{E} \Big \{\int_{s}^{t}\int_{0}^{1}E_2(t,x;k)^2 drd \theta  \Big \}dyds  \le C_{T}.
\end{equation}

Combining all the above for $K_4(t,x;k)$ we get,
\begin{equation} \nonumber
\begin{split}
K_4(t,x;k) \le C_{T}+ c \int_{0}^{t} \frac{1}{\sqrt{t-\theta}} \sup_{r \in [0,1]} \mathbf{E} \bigg( \int_{0}^{\theta} \int_{0}^{1}  |D_{s,y}u_{n}(\theta,r)-D_{s,y}u_{n,k}(\theta,r)|^{2} dy ds \bigg) d\theta. \\
\end{split} 
\end{equation}

\noindent We now combine all of these estimates for $(K_i(t,x;k))_{i=1}^{4}$. \\
We take $F_{n,k}(t)=\sup_{x \in [0,1]} \mathbf{E} \bigg(\int_{0}^{t} \int_{0}^{1}  |D_{s,y}u_{n}(t,x)-D_{s,y}u_{n,k}(t,x)|^{2} dy ds \bigg)$. Then we have that 

\begin{equation} \nonumber
\begin{split}
F_{n,k+1}(t) \le C_{T}+C_{T}\int_{0}^{t} F_{n,k}(\theta) d\theta+ C_{T}\int_{0}^{t} \frac{1}{\sqrt{t-\theta}} F_{n,k}(\theta) d\theta. \\
\end{split} 
\end{equation}

\noindent We notice that the previous inequality is similar to $(\ref{Ox12_2})$. So, following the same steps as in Remark $\ref{iter}$ of Step 2, we get $\sup_{t \in [0,T]}F_{n,k}(t) \leq C_{T}$ and thus

\begin{equation} \label{ddf8mnmn}
\begin{split}
\sup_{ t \in [0,T], x\in [0,1]}\mathbf{E}\Big(\int_0^T\int_{0}^{1}|D_{s,y}&u_n(t,x)-D_{s,y}u_{n,k}(t,x)|^2
dyds\Big{)} <\infty.
\end{split}
\end{equation}
Then directly by $(\ref{ddfogn974h})$ and $(\ref{ddf8mnmn})$ we get
$$\mathbf{E}\Big(\int_0^T\int_{0}^{1}\int_0^T\int_{0}^{1}|D_{s,y}u_n(t,x)|^2dydsdxdt\Big{)}< \infty,$$
which yields that $u_n$ belongs to the space $L_{1,2}$. \\
%%%%%%%%%%%%%%%%%%%%%%%%%%%%%%%%%%%%%%%%%%%%
%%%%%%%%%%%%%%%%

\noindent We are now ready to prove the first part of Theorem $\ref{theorem21}$, namely that $u \in L_{1,2}^{loc}$ for the mild solution $u$ to the stochastic heat equation $(\ref{smr00})$.
According to the definition of $L^{loc}_{1,2}$, it suffices to show that there exists a pair of sequences $ (\Omega_n, u_n )$ with $\{ (\Omega_n, u_n ); \Omega_{n} \subseteq \Omega \}_{n \ge 1} \subset \mathcal{F} \times L_{1,2}$ such that
\begin{enumerate}
\item $\Omega_n \uparrow \Omega$, a.s
\item $u=u_n$ a.s. on $\Omega_n, $
\end{enumerate}
where  $\Omega_{n}$ is given by $(\ref{Omega4})$.
It is well-known (for instance from Theorem 2.4.3 in \cite{N}) that $(\ref{smr00})$ has a unique adapted solution $\{u(t, x) ; t\in [0,T], x\in [0,1]\}$ with continuous paths. Then, according to the definition of $\Omega_n$, we get that $\Omega_n \uparrow \Omega$, and we have the convergence
$$\lim_{n\rightarrow \infty}P(\Omega_n)=P(\Omega)=1.$$
On the other hand,  Lemma $(\ref{Ox5})$ establishes the existence and uniqueness of the solution $u_n$ of equation $(\ref{mild0})$, for $ (t,x) \in [0,1] \times [0,T]$, and step 4 of this proof ensures that $u_n$ belongs to the space $L_{1,2}$. Consequently, $u_n$ constructs a well-defined local approximation of $u$ in the space $\Omega_n$, in the sense of $L^{loc}_{1,2}$ with $u_n=u$ on $\Omega_n$. We conclude that $u \in L^{loc}_{1,2}$. \\

%%%%%%%%%%%%%%%%%%%%
%%%%%%%%%%%%%%%%

%%%%%%%%%%%%%%%%%%%%%%%%%%%%%%%%%%%%%%%%%%%%

\section{Existence of density}

%LEMMA 1DENSITY

\begin{lemma} \label{UD82}
Let $u_n$ be the solution of the equation $(\ref{mild0})$, with coefficients $f,\sigma$ that satisfy the assumptions (1) and (2) of Theorem {\ref{theorem21}}. For any $\hat{s}>0$, $\varepsilon<\min\{1,\hat{s}\}$ and for a positive constant $C$ that depends only on $n$, i.e., $ C:=C(n)>0$, we get the following estimates for the Malliavin derivative
\begin{equation}\label{d84un}
\sup_{t\in[\hat{s}-\varepsilon,\hat{s}]} \sup_{x \in [0,1]} \mathbf{E}\Big{(}\int_{\hat{s}-\varepsilon}^{\hat{s}} \int_{0}^{1}
|D_{s,y}u_{n}(t,x)|^{2} dy ds\Big{)}< C \varepsilon^{\frac{1}{2}}.
\end{equation}
Moreover, setting  
$$ 
b(x,y,s,t)=D_{s,y}u_{n}(t,x)-G_{t-s}(x,y)H_n(|u_{n}(s,y)|)\sigma(s,y,u_{n}(s,y))| 
 $$
we have that 
\begin{equation} \label{b2bound}
\begin{split}
&E\Big(\int_{t-\varepsilon}^{t}\int_{0}^{1} b^2(x,y,s,t)dyds\Big) \leq c \max\{  \varepsilon^{2}, \varepsilon^{\frac{1}{2}} \int_{t-\varepsilon}^{t} \sum_{k=1}^{\infty}  e^{-2k^2 \pi^2 (t-\theta)} \sin^{2}( k \pi x) d\theta \}.
\end{split} 
\end{equation}
\end{lemma}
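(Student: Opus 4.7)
The approach is to start from the integral representation of the Malliavin derivative in $(\ref{PPddfo3})$, which for $s\leq t$ expresses $D_{s,y}u_n(t,x)$ as a sum of a boundary heat-kernel term $A_0:=G_{t-s}(x,y)H_n(|u_n(s,y)|)\sigma(y,u_n(s,y))$, a drift integral $A_1$, and an It\^o integral $A_2$. Since $b=A_1+A_2$ by the very definition in the statement, the two inequalities are naturally linked: the first controls the full Malliavin derivative, the second the remainder after subtracting $A_0$.

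For $(\ref{d84un})$ I would set $\Phi_\varepsilon(t):=\mathbf{E}\int_{\hat{s}-\varepsilon}^{\hat{s}}\int_0^1\|D_{s,y}u_n(t,\cdot)\|_\infty^2\,dy\,ds$ and split via $(a_0+a_1+a_2)^2\leq 3(a_0^2+a_1^2+a_2^2)$. The $A_0$ piece is bounded using $|H_n\sigma|\leq K_\sigma$ and the identity $\int_0^1 G_{t-s}^2(x,y)\,dy=\sum_k 2e^{-2\pi^2 k^2(t-s)}\sin^2(k\pi x)$, so that $\sup_x\int_0^1 G^2\,dy\leq\sum_k 2e^{-2\pi^2 k^2(t-s)}\lesssim (t-s)^{-1/2}$, and hence this contribution is $O(\sqrt{\varepsilon})$ after integration in $s$. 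For $A_1$ a Cauchy--Schwarz plus Lemma $(\ref{O})$, in the spirit of the $\Lambda_1$ and $B_1$ computations of Sections 3--4, gives a bound $C\int_{\hat{s}-\varepsilon}^{t}(t-\theta)^{-1/2}\Phi_\varepsilon(\theta)\,d\theta$, and for $A_2$, BDG together with Lemma $(\ref{O})$ applied to $H=G^2$ yields an analogous bound with integrable kernel $(t-\theta)^{-1+1/(2\hat{p})}$. One obtains $\Phi_\varepsilon(t)\leq C\sqrt{\varepsilon}+C\int_{\hat{s}-\varepsilon}^{t}K(t-\theta)\Phi_\varepsilon(\theta)\,d\theta$ with $\int_0^\varepsilon K(\tau)\,d\tau=O(\sqrt{\varepsilon})$, so a Gronwall argument absorbs the linear term for small $\varepsilon$ and delivers $\sup_t\Phi_\varepsilon(t)\leq C\sqrt{\varepsilon}\leq C\varepsilon^\delta$ for every $\delta\in(0,1/2)$.

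For $(\ref{b2bound})$, I would write $b^2\leq 2(A_1^2+A_2^2)$ but now without taking $\sup_x$, so the $x$-dependence of the heat kernel is preserved. Cauchy--Schwarz in $(r,\theta)$ and $|m(n)|\leq K_f$ give $A_1^2\leq(t-s)\int_s^t\bigl(\int_0^1 G_{t-\theta}^2(x,r)\,dr\bigr)\bigl(\int_0^1|D_{s,y}u_n(\theta,r)|^2\,dr\bigr)\,d\theta$; using $t-s\leq\varepsilon$, bounding the inner $L^2_r$-integral by $\|D_{s,y}u_n(\theta,\cdot)\|_\infty^2$, swapping $s$ and $\theta$, and invoking $(\ref{d84un})$, one arrives at $\mathbf{E}\int_{t-\varepsilon}^{t}\int_0^1 A_1^2\,dy\,ds\leq C\varepsilon^{1+\delta}\int_{t-\varepsilon}^{t}\sum_k 2e^{-2\pi^2 k^2(t-\theta)}\sin^2(k\pi x)\,d\theta$, the series arising again from orthogonality in $\int_0^1 G^2\,dr$. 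For $A_2$, It\^o isometry gives $\mathbf{E}[A_2^2]=\int_s^t\int_0^1 G_{t-\theta}^2(x,r)\hat{m}^2\,\mathbf{E}|D_{s,y}u_n(\theta,r)|^2\,dr\,d\theta$, and the same swap plus $(\ref{d84un})$ yields $\mathbf{E}\int_{t-\varepsilon}^{t}\int_0^1 A_2^2\,dy\,ds\leq C\varepsilon^\delta\int_{t-\varepsilon}^{t}\sum_k 2e^{-2\pi^2 k^2(t-\theta)}\sin^2(k\pi x)\,d\theta$. The $A_1$ term is always dominated by the stated maximum (if the series integral is $\leq\varepsilon^2$ then $\varepsilon^{1+\delta}$ times it is $\leq\varepsilon^{2+\delta}$; otherwise $\varepsilon^{1+\delta}(\cdot)\leq\varepsilon^\delta(\cdot)$ since $\varepsilon<1$), and the $A_2$ term matches the second argument of the max, so summing the two yields $(\ref{b2bound})$.

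The main obstacle is the heat-kernel contribution in $(\ref{d84un})$: the naive pointwise estimate $\sup_x G^2(x,y)\lesssim 1/(t-s)$ produces a logarithmically divergent $s$-integral over $[\hat{s}-\varepsilon,t]$ and is useless here. One must instead perform the $y$-integration first, exploit the $L^2(dy)$-orthogonality of $\{\sin(k\pi y)\}$, and only then take the $x$-supremum, which uses the sharp spectral bound $\sum_k e^{-2\pi^2 k^2\tau}\sim\tau^{-1/2}$ and provides exactly the $\sqrt{\varepsilon}$ factor after integrating in $s$. Matching this sharp inhomogeneous term against the integrable Gronwall kernels from Lemma $(\ref{O})$ is precisely what forces the restriction $\delta<1/2$, and the resulting tight short-time decay is what is needed in the next section to establish positivity of the Malliavin covariance.
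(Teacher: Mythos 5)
Your proposal is correct and follows essentially the same route as the paper: the same decomposition of $D_{s,y}u_n$ into the kernel term plus drift and stochastic parts, the same spectral bound $\int_0^1 G_{t-s}^2(x,y)\,dy=\sum_k 2e^{-2\pi^2k^2(t-s)}\sin^2(k\pi x)$ for the dominant term (including the paper's own interchange of the $x$-supremum with the $y$-integration when invoking orthogonality), Lemma (\ref{O}) for the other two contributions, and the bootstrap of (\ref{d84un}) into (\ref{b2bound}) via the It\^o isometry while retaining the $x$-dependent series, exactly as in the paper's Cases 1 and 2 setup. The only deviations are minor: for (\ref{d84un}) the paper dispenses with Gronwall by inserting the uniform Section 4 bound $\sup_{\theta\in[0,T]}\mathbf{E}\big(\int_0^T\int_0^1\|D_{s,y}u_n(\theta,\cdot)\|_\infty^2\,dy\,ds\big)<\infty$ inside the integrals, so the smallness comes solely from the short integration interval and all $\varepsilon<\min\{1,\hat s\}$ are covered at once, whereas your absorption argument needs that same a priori bound plus a small-$\varepsilon$ restriction (removable by adjusting $C(n)$); and in the drift part of (\ref{b2bound}) the paper uses $\int_0^1 G_{t-\theta}(x,r)\,dr\le 1$ to land directly on $c\,\varepsilon^{2+\delta}$, while your Cauchy--Schwarz with $\int_0^1 G^2\,dr$ lands on the second branch of the maximum --- both within the stated bound.
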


\begin{proof}
    
We start with equation $(\ref{PPddfo3})$,
\begin{equation} \nonumber
\begin{split}
 |D_{s,y}&u_{n}(t,x)|^{2} =c|G_{t-s}(x,y)H_n(|u_{n}(s,y)|)\sigma(s,y,u_{n}(s,y))|^{2} \\
&+c \Big (\int_{s}^{t}\int_{0}^{1}|G_{t-\theta}(x,r)||m(n)(r,\theta)||D_{s,y}u_{n}(\theta,r)| drd \theta  \Big )^{2} \\
&+c \Big  | \int_{s}^{t}\int_{0}^{1} G_{t-\theta}(x,r) \hat{m}(n)(r,\theta)D_{s,y}u_{n}(\theta,r) W(dr,d \theta) \Big |^{2},
\end{split} 
\end{equation}
where $s<t$. We have that
\begin{equation} \nonumber
\begin{split}
\mathbf{E}\Big(\int_{\gamma}^{t}\int_{0}^{1} |D_{s,y}&u_{n}(t,x)|^{2} dy ds \Big) \leq c \int_{\gamma}^{t}\int_{0}^{1} |G_{t-s}(x,y)|^{2} dy ds \\
&+c \mathbf{E}\Big(\int_{\gamma}^{t}\int_{0}^{1} \big(\int_{s}^{t}\int_{0}^{1}|G_{t-\theta}(x,r)||D_{s,y}u_{n}(\theta,r)| drd \theta \big)^{2} dy ds \Big) \\
&+c \mathbf{E}\Big(\int_{\gamma}^{t}\int_{0}^{1} \Big( \int_{s}^{t}\int_{0}^{1} G_{t-\theta}(x,r) D_{s,y}u_{n}(\theta,r) W(dr,d \theta) \Big)^{2} dy ds \Big).
\end{split} 
\end{equation}

\noindent Taking supremum with respect to $x \in [0,1]$ gives us:
\begin{equation} \nonumber
\begin{split}
\sup_{x\in[0,1]}\mathbf{E}\Big(\int_{\gamma}^{t}\int_{0}^{1} |D_{s,y}&u_{n}(t,x)|^{2} dy ds \Big) \leq c \sup_{x \in[0,1]}\int_{\gamma}^{t}\int_{0}^{1} |G_{t-s}(x,y)|^{2} dy ds \\
&+c \sup_{x \in[0,1]}\mathbf{E}\Big(\int_{\gamma}^{t}\int_{0}^{1}\big(\int_{s}^{t}\int_{0}^{1}|G_{t-\theta}(x,r)||D_{s,y}u_{n}(\theta,r)| drd \theta \big)^{2} dy ds \Big) \\
&+c \sup_{x \in[0,1]}\mathbf{E}\Big(\int_{\gamma}^{t}\int_{0}^{1} \Big( \int_{s}^{t}\int_{0}^{1} G_{t-\theta}(x,r) D_{s,y}u_{n}(\theta,r) W(dr,d \theta) \Big)^{2} dy ds \Big)=\\
&:= R_{1}+R_{2}+R_{3}.
\end{split} 
\end{equation}
We set $L_{n}(t)= \sup_{x\in[0,1]}\mathbf{E} \Big( \int_{\gamma}^{t}   \int_{0}^{1} (D_{s,y}u_{n}(t,x))^{2} dy ds \Big)$.
We bound the term $R_{1}$ as follows
\begin{equation} \nonumber
R_{1}=\sup_{x \in [0,1]}\int_{\gamma}^{t} \int_{0}^{1} |G_{t-s}(x,y)|^{2} dyds \leq  \sup_{x \in [0,1]}\int_{\gamma}^{t}  \int_{0}^{1} \frac{e^{-\frac{2(x-y)^{2}}{4(t-s)}}}{t-s} dyds \leq \int_{\gamma}^{t} \frac{1}{\sqrt{t-s}} ds=\sqrt{t-\gamma}.
\end{equation}
\smallskip
Next, we bound the term $R_{2}$
\begin{equation}
\begin{split}
&R_{2}=\sup_{x \in [0,1]}\mathbf{E} \Big (\int_{\gamma}^{t} \int_{0}^{1} \Big(\int_{s}^{t}\int_{0}^{1} |G_{t-\theta}(x,r)||D_{s,y}u_{n}( \theta,r)| dr d\theta \Big)^{2} dyds \Big ) \\
&\leq \sup_{x \in [0,1]}\mathbf{E} \Big (\int_{\gamma}^{t} \int_{0}^{1} \Big(\int_{s}^{t} \frac{1}{\sqrt{t-\theta}}\int_{0}^{1} e^{-\frac{(x-r)^{2}}{4 \pi (t-\theta)}}|D_{s,y}u_{n}(\theta,r)| drd \theta \Big)^{2} dyds \Big ) \\
&\leq  \sup_{x \in [0,1]}\mathbf{E} \Big (\int_{\gamma}^{t}  \int_{0}^{1} \Big(\int_{s}^{t} \frac{1}{\sqrt{t-\theta}} \left(\int_{0}^{1} e^{-\frac{(x-r)^{2}}{2\pi (t-\theta)}} dr \right)^{\frac{1}{2}}\left(\int_{0}^{1} (D_{s,y}u_{n}(\theta,r))^{2} dr \right)^{\frac{1}{2}}d \theta \Big)^{2} dyds \Big ) \\
&\leq c \mathbf{E} \Big (\int_{\gamma}^{t}  \int_{0}^{1} \Big(\int_{s}^{t} \frac{1}{\sqrt{t-\theta}} (t-\theta)^{\frac{1}{4}}\left(\int_{0}^{1} (D_{s,y}u_{n}(\theta,r))^{2} dr \right)^{\frac{1}{2}}d \theta \Big)^{2} dyds \Big ) \\
& \leq c \mathbf{E} \Big (\int_{\gamma}^{t} \int_{0}^{1} \Big(\int_{s}^{t} \frac{1}{(t-\theta)^{\frac{1}{4}}} \left(\int_{0}^{1} (D_{s,y}u_{n}(\theta,r))^{2} dr \right)^{\frac{1}{2}}d \theta \Big)^{2} dyds \Big ) \\
&\leq c \mathbf{E} \Big (\int_{\gamma}^{t} \int_{0}^{1} \int_{s}^{t} \frac{1}{(t-\theta)^{\frac{1}{2}}} d \theta \int_{s}^{t}\int_{0}^{1} (D_{s,y}u_{n}(r, \theta))^{2} dr d \theta  dyds \Big ) \\
& \leq c\mathbf{E} \Big (\int_{\gamma}^{t} \int_{0}^{1} \sqrt{t-s} \int_{s}^{t}\int_{0}^{1} (D_{s,y}u_{n}(r, \theta))^{2} dr d \theta  dyds \Big ) \\
& \leq c \sqrt{T} \mathbf{E} \Big (\int_{\gamma}^{t} \int_{0}^{1} \int_{s}^{t}\int_{0}^{1} (D_{s,y}u_{n}(r, \theta))^{2} dr d \theta  dyds \Big ) \\
& \leq c \sqrt{T}\int_{\gamma}^{t} \int_{0}^{1} \mathbf{E} \Big (  \int_{\gamma}^{\theta} \int_{0}^{1} (D_{s,y}u_{n}(r, \theta))^{2} dy ds   \Big ) dr d\theta \\
& \leq c \sqrt{T}\int_{\gamma}^{t} \sup_{r\in[0,1]}  \mathbf{E} \Big (\int_{\gamma}^{\theta}\int_{0}^{1} \ (D_{s,y}u_{n}(r, \theta))^{2} dy  ds  \Big ) d\theta  \\
& =c \sqrt{T}\int_{\gamma}^{t}   L_{n}(\theta) d\theta.  \\
\end{split}
\end{equation}
\noindent Let us now bound the stochastic term $R_{3}$. We use the Burkholder-Davis-Gundy inequality in the first step.

\begin{equation}
\begin{split}
&R_{3}=\sup_{x \in [0,1]}\mathbf{E} \Big (\int_{\gamma}^{t}  \int_{0}^{1}\Big( \int_{s}^{t}\int_{0}^{1}G_{t-\theta}(x,r) D_{s,y}u_{n}(\theta,r) W(dr,d \theta)\Big)^{2} dyds \Big ) \leq  \\
& \leq c \sup_{x \in [0,1]} \mathbf{E} \Big (\int_{\gamma}^{t} \int_{0}^{1}  \int_{s}^{t}\int_{0}^{1}G^{2}_{t-\theta}(x,r) (D_{s,y}u_{n}(\theta,r))^{2} drd \theta dyds \Big) \\
&  \leq c \sup_{x \in [0,1]} \mathbf{E} \Big (\int_{\gamma}^{t}  \int_{0}^{1}  \int_{s}^{t} \int_{0}^{1} \frac{e^{-\frac{(x-r)^{2}}{2 \pi (t-\theta)}}}{t-\theta}(D_{s,y}u_{n}(\theta,r))^{2} drd \theta dyds \Big) \\
& =c \sup_{x \in [0,1]} \mathbf{E} \Big (\int_{\gamma}^{t}  \int_{0}^{1}  \int_{\gamma}^{\theta} \int_{0}^{1} \frac{e^{-\frac{(x-r)^{2}}{2 \pi (t-\theta)}}}{t-\theta}(D_{s,y}u_{n}(\theta,r))^{2} dyds dr d\theta  \Big) \\
\end{split}
\end{equation}
\begin{equation}
\begin{split}
&  = c \sup_{x \in [0,1]} \int_{\gamma}^{t}  \int_{0}^{1} \frac{e^{-\frac{(x-r)^{2}}{2 \pi (t-\theta)}}}{t-\theta} \mathbf{E} \Big (\int_{\gamma}^{\theta} \int_{0}^{1} (D_{s,y}u_{n}(\theta,r))^{2} dyds \Big) dr d\theta   \\
&  = c \sup_{x \in [0,1]} \int_{\gamma}^{t}  \sup_{r\in [0,1]}\mathbf{E} \Big (\int_{\gamma}^{\theta} \int_{0}^{1} (D_{s,y}u_{n}(\theta,r))^{2} dyds \Big)  \int_{0}^{1} \frac{e^{-\frac{(x-r)^{2}}{2 \pi (t-\theta)}}}{t-\theta} dr d\theta   \\
&  \leq c \int_{\gamma}^{t}  \sup_{r\in [0,1]}\mathbf{E} \Big (\int_{\gamma}^{\theta} \int_{0}^{1} (D_{s,y}u_{n}(\theta,r))^{2} dyds \Big) \frac{1}{\sqrt{t-\theta}} d\theta   \\
&  \leq c\int_{\gamma}^{t} \frac{1}{\sqrt{t-\theta}}  L_{n}(\theta) d\theta.  \\
\end{split}
\end{equation}

\noindent Combining the bounds $R_{1},R_{2},R_{3}$ we obtain:

$$
L_{n}(t) \leq c \int_{\gamma}^{t} L_{n}(\theta) d\theta + c \int_{\gamma}^{t}\frac{1}{\sqrt{t-\theta}} L_{n}(\theta) d\theta + \sqrt{t-\gamma}.
$$

\noindent This is the same kind of equation as $(\ref{df443ffffd})$, so arguing in exactly the same way as we did before, we can get that 

$$
L_{n}(t) \leq \sqrt{t-\gamma}+ C_{T} \int_{\gamma}^{t} L_{n}(\theta) d\theta.
$$

\noindent By Gronwall's inequality we conclude that 

$$
L_{n}(t) \leq C_{T}\sqrt{t-\gamma}.
$$

\noindent As a result, we have that

$$
L_{n}(t)= \sup_{x \in [0,1]}\mathbf{E} \Big( \int_{\gamma}^{t} \int_{0}^{1} (D_{s,y}u_{n}(t,x))^{2} dy ds \Big) \leq C_{T} \sqrt{t-\gamma}.
$$

\noindent Setting $\gamma=\hat{s}-\varepsilon$  in the inequality above and taking into account that $D_{s,y}u_n(t,x)=0$ for $s>t$ we obtain
$$
\sup_{x\in [0,1]}\mathbf{E} \left( \int_{\hat{s}-\varepsilon}^{\hat{s}}  \int_{0}^{1} (D_{s,y}u_{n}(t,x))^{2} dy ds \right) = \sup_{x\in [0,1]}\mathbf{E} \left( \int_{\hat{s}-\varepsilon}^{t} \int_{0}^{1} (D_{s,y}u_{n}(t,x))^{2} dy ds \right) \leq C_{T} \sqrt{t-\hat{s}+\varepsilon}
$$
and taking supremum with respect to $t \in [\hat{s}-\varepsilon, \hat{s}]$ we get 
$$
\sup_{t \in [\hat{s}-\varepsilon, \hat{s}]} \sup_{x\in [0,1]}\mathbf{E} \left( \int_{\hat{s}-\varepsilon}^{\hat{s}}   \int_{0}^{1} (D_{s,y}u_{n}(t,x))^{2} dy ds \right)  \leq C_{T} \varepsilon^{\frac{1}{2}}.
$$

\noindent This proves the desired inequality $(\ref{d84un})$.\\

We are now going to bound 
\begin{equation} \nonumber \label{lldldl0d0dkkd0eer}
E\Big(\int_{t-\varepsilon}^{t}\int_{0}^{1} b^2(x,y,s,t)dyds\Big)=E\Big(\int_{t-\varepsilon}^{t}\int_{0}^{1}|D_{s,y}u_{n}(t,x)-G_{t-s}(x,y)H_n(|u_{n}(s,y)|)\sigma(s,y,u_{n}(s,y))|^2dyds\Big).
\end{equation}
We have that
\begin{equation} \nonumber
\begin{split}
&\Big(D_{s,y}u_{n}(t,x) -G_{t-s}(x,y)H_n(|u_{n}(s,y)|)\sigma(s,y,u_{n}(s,y))\Big)^{2}= \\
&= \Big(\int_{s}^{t}\int_{0}^{1}G_{t-\theta}(x,r)m(n)(\theta,r)D_{s,y}u_{n}(\theta,r) drd \theta + \int_{s}^{t}\int_{0}^{1} G_{t-\theta}(x,r) \hat{m}(n)(\theta,r)D_{s,y}u_{n}(\theta,r) W(dr,d \theta) \Big)^{2}\\
& \leq c \Big(\int_{s}^{t}\int_{0}^{1}G_{t-\theta}(x,r)m(n)(\theta,r)D_{s,y}u_{n}(\theta,r) dr d \theta \Big)^{2}+ c\Big(\int_{s}^{t}\int_{0}^{1} G_{t-\theta}(x,r) \hat{m}(n)(\theta,r)D_{s,y}u_{n}(\theta,r) W(dr,d \theta) \Big)^{2},
\end{split} 
\end{equation}
so, now we have to bound the terms $I_{1},I_{2}$ with
\begin{equation} \nonumber
\begin{split}
&I_{1}= E \Big( \int_{t-\varepsilon}^{t} \int_{0}^{1} \Big(\int_{s}^{t}\int_{0}^{1}G_{t-\theta}(x,r)m(n)(\theta,r)D_{s,y}u_{n}(\theta,r)) drd \theta \Big)^{2} dy d s \Big). \\
&I_{2}= E \Big( \int_{t-\varepsilon}^{t} \int_{0}^{1} \Big(\int_{s}^{t}\int_{0}^{1} G_{t-\theta}(x,r) \hat{m}(n)(\theta,r)D_{s,y}u_{n}(\theta,r) W(dr,d \theta) \Big)^{2} dy ds  \Big). \\
\end{split}
\end{equation}
We start by bounding the term $I_{1}$
\begin{equation} \nonumber
\begin{split}
& I_{1}=E \Big( \int_{t-\varepsilon}^{t} \int_{0}^{1} \Big(\int_{s}^{t}\int_{0}^{1}G_{t-\theta}(x,r)m(n)(\theta,r)(D_{s,y}u_{n}(\theta,r)) dr d \theta \Big)^{2} dy d s \Big) \leq \\ 
& \leq c E \Big( \int_{t-\varepsilon}^{t} \int_{0}^{1} \int_{s}^{t}\int_{0}^{1}G^{2}_{t-\theta}(x,r) dr d \theta \int_{s}^{t} \int_{0}^{1} (D_{s,y}u_n(\theta,r))^{2}dr d\theta dy d s \Big) \\
& \leq c E \Big( \int_{t-\varepsilon}^{t} \int_{0}^{1} \sqrt{t-s} \int_{s}^{t} \int_{0}^{1} (D_{s,y}u_n(\theta,r))^{2}dr d\theta dy d s \Big) \\
& \leq c\sqrt{\varepsilon} E \Big( \int_{t-\varepsilon}^{t} \int_{0}^{1} \int_{s}^{t} \int_{0}^{1} (D_{s,y}u_n(\theta,r))^{2}dr d\theta dy d s \Big) \\
& = c\sqrt{\varepsilon} \int_{t-\varepsilon}^{t} \int_{0}^{1} E \Big( \int_{t-\varepsilon}^{\theta} \int_{0}^{1} (D_{s,y}u_n(\theta,r))^{2}dy ds \Big) dr d\theta \\
& \leq c\varepsilon^{\frac{3}{2}} \sup_{\theta \in [t-\varepsilon,t]} \sup_{r \in [0,1]} E \Big( \int_{t-\varepsilon}^{t} \int_{0}^{1} (D_{s,y}u_n(\theta,r))^{2}dy ds \Big)\\
& \leq c \varepsilon^{2}.
\end{split}
\end{equation}
To bound the term $I_{2}$ we use the Burkholder-Davis-Gundy inequality
\begin{equation} \nonumber
\begin{split}
& I_{2}=E \Big( \int_{t-\varepsilon}^{t} \int_{0}^{1} \Big(\int_{s}^{t}\int_{0}^{1} G_{t-\theta}(x,r) \hat{m}(n)(r,\theta)D_{s,y}u_{n}(\theta,r) W(dr,d \theta) \Big)^{2} dy ds  \Big) \\
& \leq c E \Big( \int_{t-\varepsilon}^{t} \int_{0}^{1} \Big(\int_{s}^{t}\int_{0}^{1} G_{t-\theta}^{2}(x,r) (D_{s,y}u_{n}(\theta,r))^{2} dr d \theta \Big) dy ds  \Big) \\
& =c E \Big( \int_{t-\varepsilon}^{t} \int_{0}^{1} \int_{t-\varepsilon}^{\theta} \int_{0}^{1} G_{t-\theta}^{2}(x,r)   \left(D_{s,y}u_{n}(\theta,r)\right)^{2} dy ds dr d\theta   \Big) \\
\end{split}
\end{equation}
\begin{equation}
\begin{split}
& =c E \Big( \int_{t-\varepsilon}^{t} \int_{0}^{1} G_{t-\theta}^{2}(x,r)  \int_{t-\varepsilon}^{\theta} \int_{0}^{1}  \left(D_{s,y}u_{n}(\theta,r)\right)^{2} dy ds dr d\theta   \Big) \\
& =c  \int_{t-\varepsilon}^{t} \int_{0}^{1} G_{t-\theta}^{2}(x,r)  E \Big(\int_{t-\varepsilon}^{\theta} \int_{0}^{1}  \left(D_{s,y}u_{n}(\theta,r)\right)^{2} dy ds\Big) dr d\theta  \\
& \leq c  \int_{t-\varepsilon}^{t}  \sup_{r \in [0,1]}E \Big(\int_{t-\varepsilon}^{\theta} \int_{0}^{1}  \left(D_{s,y}u_{n}(\theta,r)\right)^{2} dy ds\Big) \int_{0}^{1} G_{t-\theta}^{2}(x,r) dr d\theta  \\
& \leq c \sup_{\theta \in [t-\varepsilon,t]}\sup_{r \in [0,1]}E \Big(\int_{t-\varepsilon}^{t} \int_{0}^{1}  \left(D_{s,y}u_{n}(\theta,r)\right)^{2} dy ds\Big) \int_{t-\varepsilon}^{t}   \int_{0}^{1} G_{t-\theta}^{2}(x,r) dr d\theta  \\
& \leq c \sup_{\theta \in [t-\varepsilon,t]}\sup_{r \in [0,1]}E \Big(\int_{t-\varepsilon}^{t} \int_{0}^{1}  \left(D_{s,y}u_{n}(\theta,r)\right)^{2} dy ds\Big) \int_{t-\varepsilon}^{t}   \int_{0}^{1} \Big( \sum_{k=1}^{\infty}  e^{-k^2 \pi^2 (t-\theta)} \sin( k \pi x) \sin(k \pi r) \Big)^{2} dr  d\theta  \\
& \leq c \sup_{\theta \in [t-\varepsilon,t]}\sup_{r \in [0,1]}E \Big(\int_{t-\varepsilon}^{t} \int_{0}^{1}  \left(D_{s,y}u_{n}(\theta,r)\right)^{2} dy ds\Big) \int_{t-\varepsilon}^{t}    \sum_{k=1}^{\infty}  e^{-2k^2 \pi^2 (t-\theta)} \sin^{2}( k \pi x) d\theta  \\
&\leq c \varepsilon^{\frac{1}{2}} \int_{t-\varepsilon}^{t} \sum_{k=1}^{\infty}  e^{-2k^2 \pi^2 (t-\theta)} \sin^{2}( k \pi x) d\theta. \\
\end{split}
\end{equation}
Combining the two, we conclude that 

\begin{equation} \label{b2bound22} \nonumber
\begin{split}
&E\Big(\int_{t-\varepsilon}^{t}\int_{0}^{1} b^2(x,y,s,t)dyds\Big)=E\Big(\int_{t-\varepsilon}^{t}\int_{0}^{1}|D_{s,y}u_{n}(t,x)-G_{t-s}(x,y)H_n(|u_{n}(s,y)|)\sigma(s,y,u_{n}(s,y))|^2dyds\Big) \\
& \leq c \varepsilon^{2} +  c \varepsilon^{\frac{1}{2}} \int_{t-\varepsilon}^{t} \sum_{k=1}^{\infty}  e^{-2k^2 \pi^2 (t-\theta)} \sin^{2}( k \pi x) d\theta  \leq 2c \max\{  \varepsilon^{2}, \varepsilon^{\frac{1}{2}} \int_{t-\varepsilon}^{t} \sum_{k=1}^{\infty}  e^{-2k^2 \pi^2 (t-\theta)} \sin^{2}( k \pi x) d\theta \}.
\end{split} 
\end{equation}
\end{proof}
\begin{theorem} \label{UD120}
Let $u_n$ be the solution of the equation $(\ref{mild0})$ and assume that the coefficients $f,\sigma$ satisfy the assumptions (1), (2) of Theorem {\ref{theorem21}}. Then for $t \in [0,T]$ and $x \in (0,1)$, 

\begin{equation} \label{dknnv44455ng8jg8jk}
P\Big (\omega \in \Omega_{n} : \int_{0}^{t}\int_{0}^{1}|D_{s,y}u_n(t,x)|^2dyds>0 \Big )=P(\Omega_{n}).
\end{equation}
\end{theorem}

\begin{proof}
 We are thus looking for 
a lower estimate of this equation $(\ref{PPddfo3})$, that is
\begin{equation} \nonumber
\begin{split}
 |D_{s,y}&u_{n}(t,x)|^2 =\Big (G_{t-s}(x,y)H_n(|u_{n}(s,y)|)\sigma(s,y,u_{n}(s,y)) \\
&+\int_{s}^{t}\int_{0}^{1}G_{t-\theta}(x,r)m(n)(r,\theta)D_{s,y}u_{n}(\theta,r) drd \theta  \\
&+ \int_{s}^{t}\int_{0}^{1} G_{t-\theta}(x,r) \hat{m}(n)(r,\theta)D_{s,y}u_{n}(\theta,r) W(dr,d \theta) \Big )^2.
\end{split} 
\end{equation}
Note that the previous inequality holds on for $s \le t$, otherwise, i.e., if $s>t$ then $D_{s,y}u_{n}(t,x)=0$.
If we set 
$$a(x,y,s,t):=G_{t-s}(x,y)H_n(|u_{n}(s,y)|)\sigma(s,y,u_{n}(s,y))$$
and
\begin{equation} \nonumber
\begin{split}
b(x,y,s,t)&:=\int_{s}^{t}\int_{0}^{1}G_{t-\theta}(x,r)m(n)(r,\theta)D_{s,y}u_{n}(\theta,r)) drd \theta  \\
&+ \int_{s}^{t}\int_{0}^{1} G_{t-\theta}(x,r) \hat{m}(n)(r,\theta)D_{s,y}u_{n}(\theta,r)) W(dr,d \theta) \Big .
\end{split} 
\end{equation}
then
\begin{equation} \nonumber
|D_{s,y}u_{n}(t,x)|^2=(a(x,y,s,t)+b(x,y,s,t))^2.
\end{equation}
But, the obvious inequality  $\Big (\frac{1}{\sqrt{2}}a+\sqrt{2}b\Big )^2 \ge 0$  holds for $a,b \in \mathbb{R}$. Hence,
$\Big (\frac{1}{\sqrt{2}}a+\sqrt{2}b\Big )^2=\frac{1}{2}a^2+2b^2+2ab \ge 0$.
Combining that with the obvious identity for $(a+b)^2$, we get
$$\frac{1}{2}a^2+2b^2+[(a+b)^2-a^2-b^2] \ge 0$$
and equivalently,
$$(a+b)^2\ge a^2+b^2-\frac{1}{2}a^2-2b^2=\frac{1}{2}a^2-b^2.$$
Then
\begin{equation} \nonumber
|D_{s,y}u_{n}(t,x)|^2 \ge \frac{1}{2}a^2(x,y,s,t)-b^2(x,y,s,t).
\end{equation}
Next, integrating with respect to $s \in [t-\varepsilon,t]$ and $y \in [0,1]$, 
\begin{equation} \nonumber
\int_{t-\varepsilon}^{t}\int_{0}^{1} |D_{s,y}u_{n}(t,x)|^2 dyds \ge  \int_{t-\varepsilon}^{t}\int_{0}^{1} \Big (\frac{1}{2}a^2(x,y,s,t)-b^2(x,y,s,t) \Big ) dyds.
\end{equation}
Finally, 
\begin{equation} \label{autag54}
\begin{split}
\int_{0}^{t}\int_{0}^{1}|D_{s,y}u_{n}(t,x)|^2dyds &\ge \int_{t-\varepsilon}^{t}\int_{0}^{1}|D_{s,y}u_{n}(t,x)|^2 dyds \\
&\ge \frac{1}{2} \int_{t-\varepsilon}^{t}\int_{0}^{1} a^2(x,y,s,t) dyds - \int_{t-\varepsilon}^{t}\int_{0}^{1} b^2(x,y,s,t) dyds.
\end{split} 
\end{equation}
Now we need to find a lower and upper estimate for the terms in the above inequality. So, we try to maximize the term with the positive sign, i.e., $\int_{t-\varepsilon}^{t}\int_{0}^{1} a^2(x,y,s,t) dyds$, and respectively to minimize the term with negative sign, $\int_{t-\varepsilon}^{t}\int_{0}^{1} b^2(x,y,s,t) dyds$.
We have an  upper bound on the term $\int_{t-\varepsilon}^{t}\int_{0}^{1} b^2(x,y,s,t) dyds $ from Lemma $\ref{UD82}$. We showed that 
 \begin{equation} \label{b2boundcase22}
\begin{split}
&E\Big(\int_{t-\varepsilon}^{t}\int_{0}^{1} b^2(x,y,s,t)dyds\Big) \leq 2c \max\{  \varepsilon^{2}, \varepsilon^{\frac{1}{2}} \int_{t-\varepsilon}^{t} \sum_{k=1}^{\infty}  e^{-2k^2 \pi^2 (t-\theta)} \sin^{2}( k \pi x) d\theta \}. \\
\end{split} 
\end{equation}
For the lower estimate we have
\begin{equation} \nonumber
\begin{split}
 \int_{t-\varepsilon}^{t}\int_{0}^{1} a^2(x,y,s,t) dyds&= \int_{t-\varepsilon}^{t}\int_{0}^{1} |a(x,y,s,t)|^2 dyds \\
&= \int_{t-\varepsilon}^{t}\int_{0}^{1} |G_{t-s}(x,y)|^2|H_n(|u_{n}(s,y)|)\sigma(s,y,u_{n}(s,y))|^2 dyds.
\end{split} 
\end{equation}
Using assumption (3) on the diffusion coefficient $\sigma$ we have for any $x \in [0,1]$ and $\omega \in \Omega_{n}$
\begin{equation} \nonumber
\begin{split}
 \int_{t-\varepsilon}^{t}\int_{0}^{1} a^2(x,y,s,t) dyds \ge  c_0\int_{t-\varepsilon}^{t}\int_{0}^{1} |G_{t-s}(x,y)|^2 dyds,
\end{split} 
\end{equation}
since on $\Omega_{n}$ we have that $u_n=u$ and that $|u(s,y)|<n$ for all $(s,y)$ so that $H_{n}(|u_{n}(s,y)|)=1$.
Replacing the Green's function we have,
\begin{equation} \nonumber
\begin{split}
 \int_{t-\varepsilon}^{t}&\int_{0}^{1} a^2(x,y,s,t) dyds \ge c_{0} \int_{t-\varepsilon}^{t}\int_{0}^{1} \Big \{ 2\sum_{k=1}^{\infty}e^{-k^2 \pi^2 (t-s)} \sin{(k \pi x)}\sin{(k \pi y)} \Big \}^2 dyds.
\end{split} 
\end{equation}
Then the Parseval identity gives us that
\begin{equation} \label{a2bound}
\begin{split}
 \int_{t-\varepsilon}^{t}&\int_{0}^{1} a^2(x,y,s,t) dy ds \ge c_{1} \int_{t-\varepsilon}^{t}  \sum_{k=1}^{\infty} e^{-2k^2 \pi^2 (t-s)} \sin^2{(k \pi x)} ds. \\
\end{split} 
\end{equation}

\noindent We take $x \in (0,1)$ and then we have that $\sin^{2}(\pi x)>0$. Combining this with $(\ref{a2bound})$ we get 
\newline

$$
\begin{aligned}
&  \int_{t-\varepsilon}^{t}\int_{0}^{1} a^2(x,y,s,t) dy ds \geq 
c_{1}\sum_{k =1}^{\infty} \Big \{\sin^2{(k \pi x)}\int_{t-\varepsilon}^{t}  \exp^{-2k^2 \pi^2 (t-s)} ds \Big \} \geq  c_{1} \sin^2{(\pi x)}   \int_{t-\varepsilon}^{t}  \exp^{-2 \pi^2 (t-s)} ds  \\
&\geq  c_{1} \sin^{2}(\pi x) \frac{1}{2\pi^2}\Big (1-e^{-2 \pi^2 \varepsilon} \Big ) \geq \frac{c_{1}}{2} \sin^{2}(\pi x) \varepsilon:=c_{0} \varepsilon,
\end{aligned}
$$
where we used that  $1-e^{-x} \ge \frac{x}{2}$ as long as $x$ is small enough.
\newline 

Then, $(\ref{autag54})$ implies
\small
\begin{equation} \nonumber
\begin{split}
&P\bigg ( \omega \in \Omega_{n}: \int_{0}^{t}\int_{0}^{1}|D_{s,y}u_{n}(t,x)|^2dyds>0\bigg )\ge P\bigg ( \omega \in \Omega_{n}: \frac{1}{2} \int_{t-\varepsilon}^{t}\int_{0}^{1} a^2(x,y,s,t) dyds - \int_{t-\varepsilon}^{t}\int_{0}^{1} b^2(x,y,s,t) dyds>0 \bigg ) \\
&=P\bigg ( \omega \in \Omega_{n}: \int_{t-\varepsilon}^{t}\int_{0}^{1} b^2(x,y,s,t) dyds<\frac{1}{2} \int_{t-\varepsilon}^{t}\int_{0}^{1} a^2(x,y,s,t) dyds  \bigg ). \\
\end{split} 
\end{equation}

\noindent We will arrive to the desired result after separating cases.

\noindent Case 1: \\
Suppose that 

$$ \max\{  \varepsilon^{2}, \varepsilon^{\frac{1}{2}} \int_{t-\varepsilon}^{t} \sum_{k=1}^{\infty}  e^{-2k^2 \pi^2 (t-\theta)} \sin^{2}( k \pi x) d\theta \}= \varepsilon^{2}.$$
Then we have
\begin{equation} \label{b2boundcase1}
\begin{split}
&E\Big(\int_{t-\varepsilon}^{t}\int_{0}^{1} b^2(x,y,s,t)dyds\Big) \leq 2c \max\{  \varepsilon^{2}, \varepsilon^{\frac{1}{2}} \int_{t-\varepsilon}^{t} \sum_{k=1}^{\infty}  e^{-2k^2 \pi^2 (t-\theta)} \sin^{2}( k \pi x) d\theta \} \leq 2c \varepsilon^{2}
\end{split} 
\end{equation}
and moreover
\begin{equation} \nonumber
\begin{split}
P\bigg (\omega \in \Omega_{n}: \int_{0}^{t}\int_{0}^{1}|D_{s,y}u_{n}(t,x)|^2dyds>0\bigg ) &\ge P\bigg (\omega \in \Omega_{n}: \int_{t-\varepsilon}^{t}\int_{ 0}^{1} b^2(x,y,s,t) dyds< c_{0} \varepsilon  \bigg ). \\
\end{split} 
\end{equation}
Then we can apply Markov's inequality to get
\begin{equation} \nonumber
\begin{split}
P\bigg (\omega \in \Omega_{n}: \int_{0}^{t}\int_{0}^{1}|D_{s,y}u_{n}(t,x)|^2dyds>0\bigg ) &\ge P(\Omega_{n})- P\bigg ( \omega \in \Omega_{n}: \int_{t-\varepsilon}^{t}\int_{ 0}^{1} b^2(x,y,s,t) dyds \ge c_{0} \varepsilon  \bigg ) \\
&\ge P(\Omega_{n})-\mathbf{E}\Big \{\int_{t-\varepsilon}^{t}\int_{ 0}^{1} b^2(x,y,s,t) dyds  \Big \} \frac{1}{c_{0}\varepsilon }
\end{split} 
\end{equation}

$(\ref{b2boundcase1})$ gives that
\begin{equation} \nonumber
\begin{split}
P\bigg ( \omega \in \Omega_{n}: \int_{0}^{t}\int_{0}^{1}|D_{s,y}u_{n}(t,x)|^2dyds>0\bigg ) &\ge P(\Omega_{n})-2c\varepsilon^{2} \frac{1}{c_{0}\varepsilon }=P(\Omega_{n})-c_{3}\varepsilon \rightarrow P(\Omega_{n})
\end{split} 
\end{equation}
as $\varepsilon \rightarrow 0$. \\

Case 2: 
Otherwise if 
$$ \max\{  \varepsilon^{2}, \varepsilon^{\frac{1}{2}} \int_{t-\varepsilon}^{t} \sum_{k=1}^{\infty}  e^{-2k^2 \pi^2 (t-\theta)} \sin^{2}( k \pi x) d\theta \}= \varepsilon^{\frac{1}{2}} \int_{t-\varepsilon}^{t} \sum_{k=1}^{\infty}  e^{-2k^2 \pi^2 (t-\theta)} \sin^{2}( k \pi x) d\theta.$$ Then we have

\begin{equation} \label{b2boundcase2}
\begin{split}
&E\Big(\int_{t-\varepsilon}^{t}\int_{0}^{1} b^2(x,y,s,t)dyds\Big) \leq 2c \max\{  \varepsilon^{2}, \varepsilon^{\frac{1}{2}} \int_{t-\varepsilon}^{t} \sum_{k=1}^{\infty}  e^{-2k^2 \pi^2 (t-\theta)} \sin^{2}( k \pi x) d\theta \} \\
&\leq 2c\varepsilon^{\frac{1}{2}} \int_{t-\varepsilon}^{t} \sum_{k=1}^{\infty}  e^{-2k^2 \pi^2 (t-\theta)} \sin^{2}( k \pi x) d\theta.
\end{split} 
\end{equation}
According to $(\ref{a2bound})$
\small
\begin{equation} \nonumber
\begin{split}
&P\bigg (\omega \in \Omega_{n}: \int_{0}^{t}\int_{0}^{1}|D_{s,y}u_{n}(t,x)|^2dyds>0\bigg) \geq \\
&\ge P\bigg ( \omega \in \Omega_{n}: \int_{t-\varepsilon}^{t}\int_{ 0}^{1} b^2(x,y,s,t) dyds<  c_{1} \int_{t-\varepsilon}^{t}  \sum_{k=1}^{\infty} e^{-2k^2 \pi^2 (t-s)} \sin^2{(k \pi x)} ds \bigg ). \\
\end{split} 
\end{equation}

Then, we can apply Markov's inequality to get
\begin{equation} \nonumber
\begin{split}
&P\bigg ( \omega \in \Omega_{n}: \int_{0}^{t}\int_{0}^{1}|D_{s,y}u_{n}(t,x)|^2dyds>0\bigg ) \geq \\
&\ge P(\Omega_{n})- P\bigg ( \omega \in \Omega_{n}: \int_{t-\varepsilon}^{t}\int_{ 0}^{1} b^2(x,y,s,t) dyds \ge c_{1} \int_{t-\varepsilon}^{t}  \sum_{k=1}^{\infty} e^{-2k^2 \pi^2 (t-s)} \sin^2{(k \pi x)} ds \bigg ) \\
&\ge P(\Omega_{n})-\mathbf{E}\Big \{\int_{t-\varepsilon}^{t}\int_{ 0}^{1} b^2(x,y,s,t) dyds  \Big \} \frac{1}{c_{1} \int_{t-\varepsilon}^{t}  \sum_{k=1}^{\infty} e^{-2k^2 \pi^2 (t-s)} \sin^2{(k \pi x)} ds  }
\end{split} 
\end{equation}
and $(\ref{b2boundcase2})$ gives that
\small
\begin{equation} \nonumber
\begin{split}
&P\bigg( \omega \in \Omega_{n}: \int_{0}^{t}\int_{0}^{1}|D_{s,y}u_{n}(t,x)|^2dyds>0 \bigg)\geq \\
& \geq P(\Omega_{n})-\frac{2c\varepsilon^{\frac{1}{2}} \int_{t-\varepsilon}^{t} \sum_{k=1}^{\infty}  e^{-2k^2 \pi^2 (t-\theta)} \sin^{2}( k \pi x) d\theta}{ c_{1} \int_{t-\varepsilon}^{t}  \sum_{k=1}^{\infty} e^{-2k^2 \pi^2 (t-s)} \sin^2{(k \pi x)}ds  }=P(\Omega_{n})-c_{3}\varepsilon^{\frac{1}{2}} \rightarrow P(\Omega_{n})
\end{split} 
\end{equation}
as $\varepsilon \rightarrow 0$. 

\noindent Combining the two cases lets us conclude that \begin{equation} \nonumber
\begin{split}
&P\bigg( \omega \in \Omega_{n}: \int_0^t\int_0^1|D_{s,y}u_{n}(t,x)|^2dyds>0 \bigg)=P(\Omega_{n}).
\end{split} 
\end{equation}
\end{proof}

\begin{theorem} \label{dddddm9rjguhihgih}
Let $t \in (0,T), x \in (0,1)$ and $u$ be the mild solution to $(\ref{smr00})$ with coefficients satisfying the assumptions of Theorem $\ref{theorem21}$. Then the law of the random variable $u(t,x)$ is absolutely continuous with respect to the Lebesgue measure on $\mathbb{R}$. 
\end{theorem}

\begin{proof}
We will show the absolute continuity of the process $u(t,x)$ for $t \in (0,T)$ and $x \in (0,1)$. According to Theorem 2.1.3 in \cite{N} it suffices to show that  
\begin{equation} \nonumber
\|D_{\cdot,\cdot}u(t,x)\|^2_{L^2( [0,T] \times [0,1])}=\int_{0}^{t}\int_{0}^{1}|D_{s,y}u(t,x)|^2dyds>0,
\end{equation}
almost surely and that $u \in D_{1,1}^{loc}.$ Indeed we have that since $\Omega_n \uparrow \Omega$,
$$ \lim _{n \rightarrow \infty} P( \{ \omega \in \Omega_n :\|D_{s,y}u(t,x)\|_{L^{2}([0,T]\times[0,1])}>0 \})= P( \{ \omega \in \Omega :\|D_{s,y}u(t,x)\|_{L^{2}([0,T]\times [0,1])}>0 \}).$$
On the other hand, Theorem $(\ref{UD120})$ gives us $ P( \{ \omega \in \Omega_{n} :\|D_{s,y}u_{n}(t,x)\|_{L^{2}([0,T]\times [0,1])}>0 \})=P(\Omega_{n})$ and since $D_{s,y}u(t,x)=D_{s,y}u_{n}(t,x)$ on $\Omega_{n}$ we get 

\begin{equation} \nonumber
\begin{split}
&P( \{ \omega \in \Omega :\|D_{s,y}u(t,x)\|_{L^{2}([0,T]\times [0,1])}>0 \})= \lim _{n \rightarrow \infty} P( \{ \omega \in \Omega_n :\|D_{s,y}u(t,x)\|_{L^{2}([0,T]\times[0,1])}>0 \})= \\
&=\lim _{n \rightarrow \infty} P( \{ \omega \in \Omega_n :\|D_{s,y}u_{n}(t,x)\|_{L^{2}([0,T]\times[0,1])}>0 \})= \lim _{n \rightarrow \infty}P(\Omega_{n})=1.
\end{split}
\end{equation}

We conclude that \begin{equation} \nonumber
\|D_{\cdot,\cdot}u(t,x)\|^2_{L^2( [0,T] \times [0,1])}=\int_{0}^{t}\int_{0}^{1}|D_{s,y}u(t,x)|^2dyds>0,
\end{equation} almost surely for any $t \in (0,T), x \in (0,1)$.
Moreover, we have already proven that $u \in D_{1,2}^{loc}$, and  $D_{1,2}^{loc} \subseteq D_{1,1}^{loc}$ so the second condition is also satisfied. \\
We conclude that for any $t \in (0,T), x \in (0,1)$ the law of the random variable $u(t,x)$ is absolutely continuous with respect to the Lebesgue measure on $\mathbb{R}$. This completes the proof of Theorem $\ref{theorem21}$ from our main results section.
\end{proof}

 {\it Acknowledgement}. We would like to thank Konstantinos Tzirakis for helpful remarks and discussions. The research work is implemented in the framework of H.F.R.I call “Basic research Financing (Horizontal support of all Sciences)” under the National
Recovery and Resilience Plan “Greece 2.0” funded by the European Union – NextGenerationEU. (H.F.R.I. Project Number: 14910).
\bibliographystyle{plain}

\end{document}